\documentclass[12pt, a4paper]{article}
\usepackage{amsfonts}
\usepackage{bbm}
\usepackage{mathrsfs}
\usepackage{latexsym}
\usepackage{amssymb}
\usepackage{amsmath}
\usepackage{color,xcolor}
\usepackage{mathtools}
\usepackage{cite}
\usepackage{amsthm}
\usepackage{amsfonts}
\usepackage{bbm}
\usepackage{mathrsfs}
\usepackage{latexsym}
\usepackage{graphicx}
\usepackage{amssymb}
\usepackage{amsmath}
\usepackage{color,xcolor}
\usepackage{mathtools}
\usepackage{cite}
\usepackage{amsthm}
\usepackage{mathrsfs}
\usepackage{amsmath}
\usepackage{amsfonts,amsthm,amssymb,cite}
\usepackage{graphics,graphicx}
\usepackage{epstopdf}
\parindent 6mm
\hoffset -9mm
\usepackage{color}
\textwidth 159mm \textheight240mm \oddsidemargin=1cm\evensidemargin=
\oddsidemargin\topmargin=-1.5cm

\usepackage{amssymb,latexsym,bm}
\usepackage{}
\usepackage{color}
\usepackage{amssymb}
\usepackage{amsfonts}
\usepackage{bbm}
\usepackage{mathrsfs}
\usepackage{mathrsfs}
\usepackage{longtable}
\usepackage{geometry}
\usepackage{amsmath,epsfig,cite}
\usepackage{amsthm}
\usepackage{amsmath}
\usepackage{latexsym}
\usepackage{amsmath,bm}
\usepackage{graphics}
\usepackage{multirow}
\usepackage{graphics,epsfig,psfrag}
\usepackage{epstopdf}
\usepackage{caption}
\usepackage{float}
\usepackage{amsfonts,mathrsfs,latexsym,amsmath,amssymb,amsthm}
 \usepackage{cite}
\usepackage{graphicx}
\usepackage{subfigure}

\marginparwidth 0pt
\oddsidemargin 0pt
\evensidemargin 0pt
\topmargin -0.5 cm
\textheight 23.2 truecm
\textwidth 16.0 truecm
\parskip 8pt

\newtheorem{theorem}{Theorem}[section]
\newtheorem{lemma}[theorem]{Lemma}
\newtheorem{corollary}[theorem]{Corollary}

\newtheorem{conjecture}[theorem]{Conjecture}
\newtheorem{remark}[theorem]{Remark}

\def\det{{\rm det}}

\title{{\Large \bf The Laplacian spectral ratio of connected graphs
\thanks{ Supported by the National Natural Science Foundation of China (Nos. 12261074 and 12071411). }~}}
\author{Zhen Lin$^{1, 2}$\thanks{Corresponding author. E-mail addresses: lnlinzhen@163.com (Z. Lin).}, Jiajia Wang$^{1}$, Min Cai$^{1}$ \\
{\footnotesize $^1$School of Mathematics and Statistics,} \\{ \footnotesize The State Key Laboratory of Tibetan Intelligent Information
Processing and Application,} \\ {\footnotesize
Qinghai Normal University,}\\
 {\footnotesize  Xining, 810008, Qinghai, China}\\
 \footnotesize  $^2$Academy of Plateau Science and Sustainability,
\\  \footnotesize  People's Government of Qinghai Province and Beijing Normal University,\\
\footnotesize Xining, 810008, Qinghai, China
 }

\date{}
\begin{document}
\openup 1.0\jot
\date{}\maketitle
\begin{abstract}
Let $G$ be a simple connected undirected graph. The Laplacian spectral ratio of $G$, denoted by $R_L(G)$, is defined as the quotient between the largest and second smallest Laplacian eigenvalues of $G$, which is closely related to the structural parameters of a graph (or network), such as diameter, $t$-tough, perfect matching, average density of cuts, and synchronizability, etc. In this paper, we obtain some bounds of the Laplacian spectral ratio, which improves the known results. In addition, we give counter-examples on the upper bound of the Laplacian spectral ratio conjecture of trees, and propose a new conjecture.

\bigskip

\noindent {\bf Mathematics Subject Classification:} 05C05; 05C50

\noindent {\bf Keywords:}  Laplacian eigenvalues, Ratio, Tree, Zagreb index

\end{abstract}
\baselineskip 20pt

\section{\large Introduction}

Let $G$ be a simple connected undirected graph with the vertex set $V(G)$ and edge set $E(G)$. Denote $\overline{G}$ by the complement of $G$. For $v_i\in V(G)$, $N(v{_i})$ denotes the neighborhood of $v_i$
in $G$ and $d(v_i)=|N(v{_i})|$ denotes the degree of vertex $v_i$ in $G$. The maximum degree, the minimum degree, and the diameter of $G$ are denoted by $\Delta$, $\delta$ and $\mathcal{D}$, respectively. The first Zagreb index \cite{GT}, denoted by $Z_1$, is equal to the sum of squares of the degrees of the vertices in $G$, which is a very important topological index in mathematical chemistry. Denote by $P_n$ and $K_{1,\,n-1}$ the path and the star with $n$ vertices.

The Laplacian matrix of $G$ with $n$ vertices, denoted by $L(G)$, is given by $L(G)=D(G)-A(G)$, where $A(G)$ is the adjacency matrix and $D(G)$ is the diagonal matrix of its vertex degrees. And the largest and second smallest Laplacian eigenvalues of $G$ are called the Laplacian spectral radius and algebraic connectivity of $G$, denoted by $\mu_{1}(G)$ and $\mu_{n-1}(G)$.
The Laplacian spectral ratio of a connected graph $G$ with $n$ vertices is defined as
$$R_L(G)=\frac{\mu_1(G)}{\mu_{n-1}(G)}.$$

In 1995, Haemers \cite{H} gave the relationships between diameter, average density of cuts and Laplacian spectral ratio as follows:
$$\mathcal{D}<1+\frac{\log2(n-1)}{\log(\sqrt{R_L(G)}+1)-\log(\sqrt{R_L(G)}-1)},$$
$$\frac{|X||Y|}{(n-|X|)(n-|Y|)}\leq \left(\frac{R_L(G)-1}{R_L(G)+1}\right)^2,$$
where $X$ and $Y$ are disjoint sets of vertices of $G$, that is no edge between $X$ and $Y$.
In 2006, Goldberg \cite{G} obtained a lower bound on Laplacian spectral ratio based on the maximum degree and the minimum degree:
$$R_L(G)\geq \frac{\Delta+1}{\delta},$$
when $G$ is not a complete graph. Brouwer and Haemers \cite{BH} proved that a connected graph of even order $n$ satisfies $R_L(G)\leq 2$, then $G$ has a perfect matching. Liu and Chen \cite{LC} showed that if $R_L(G)\leq \frac{3}{2}$, then $G$ is $2$-tough. Let $G$ be a connected graph with $n$ vertices and $m$ edges. By the Kantorovich inequality \cite{K}, we have
$$Kf(G)\leq \frac{n(n-1)^2}{8m}\left(R_L(G)+\frac{1}{R_L(G)}+2\right),$$
where $Kf(G)$ is called the Kirchhoff index. Recently, Lin \cite{L1} established mathematical relations between the biharmonic index and the Laplacian spectral ratio.

On the other hand, Barahona et al. \cite{BP} showed that a network $G$ exhibits better synchronizability when the ratio $R_L(G)$ is as small as possible. Since the synchronizability depends on $R_L(G)$,  Arenas et al. \cite{ADKMZ} believed that a sound analysis must attack the raw problem of the spectral properties of networks from a mathematical point of view, especially focus on the bounds of $R_L(G)$, given that the simulation experiments are far from being conclusive.

For trees, You and Liu \cite{YL} proposed the following conjecture, and gave the conditions for the conjecture to hold in terms of diameter and maximum degree.

\begin{conjecture}{\bf (\cite{YL})}\label{con1,1} 
Let $T$ be a tree with  $n\geq 3$ vertices. Then
$$R_L(K_{1,\,n-1})\leq R_L(T)\leq R_L(P_n),$$
and the left (right) equality holds if and only if $T= K_{1,\,n-1}$ ($T= P_n$).
\end{conjecture}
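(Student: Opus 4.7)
I would begin by pinning down the two benchmarks. Since the Laplacian spectrum of $K_{1,n-1}$ is $\{0, 1, \ldots, 1, n\}$ with the eigenvalue $1$ repeated $n-2$ times, we get $R_L(K_{1,n-1}) = n$; and using the closed form $\mu_k(P_n) = 2(1-\cos(k\pi/n))$ together with the identity $(1+\cos\theta)/(1-\cos\theta) = \cot^2(\theta/2)$ we get $R_L(P_n) = \cot^2(\pi/(2n))$. The two claimed inequalities therefore reduce to $n \leq R_L(T) \leq \cot^2(\pi/(2n))$ for every tree $T$ on $n$ vertices, with equality on the left iff $T = K_{1,n-1}$ and on the right iff $T = P_n$.

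\textbf{Lower bound.} The plan is to prove $\mu_1(T) \geq n\,\mu_{n-1}(T)$. Fiedler's classical result that $\mu_{n-1}(T) \leq 1$ for every tree on $n \geq 2$ vertices, with equality iff $T = K_{1,n-1}$, handles the equality case directly. For $T \neq K_{1,n-1}$, I would combine a quantitative strict refinement of Fiedler's bound (in terms of a structural parameter such as the maximum degree, the number of pendant neighbors at a branching vertex, or the diameter) with a lower bound on $\mu_1$, for example the Grone-type inequality $\mu_1(T) \geq \Delta(T) + 1$ (equality iff $T = K_{1,n-1}$), or the sharper edge bound $\mu_1(T) \geq \max_{uv \in E(T)} (d(u)+d(v))$. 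The hope is that these two refinements trade off exactly so that $\mu_1/\mu_{n-1} > n$ strictly whenever $T \neq K_{1,n-1}$; failing that, I would fall back on induction on $n$, peeling off a pendant edge and tracking both $\mu_1$ and $\mu_{n-1}$ under the deletion.

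\textbf{Upper bound and main obstacle.} For $R_L(T) \leq R_L(P_n)$, the natural route is a grafting argument: given $T \neq P_n$, locate a branching vertex on a longest path of $T$, move one of its off-path subtrees to a free endpoint of that path to obtain $T'$ one step closer to $P_n$, and show $R_L(T') \geq R_L(T)$; iterating must terminate at $P_n$. The crux is this monotonicity step, and I expect it to be the main obstacle. The functional $R_L = \mu_1/\mu_{n-1}$ couples two eigenvalues that move in the \emph{same} direction under such grafts: among trees on $n$ vertices $P_n$ minimizes $\mu_{n-1}$ (Fiedler), but it also minimizes $\mu_1$, since $\mu_1(P_n) = 2+2\cos(\pi/n) < 4$ while $\mu_1(K_{1,n-1}) = n$. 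Hence every graft from $T$ toward $P_n$ pushes both $\mu_1$ and $\mu_{n-1}$ downward, and the proof requires a quantitative inequality guaranteeing that the relative drop in $\mu_{n-1}$ always dominates the relative drop in $\mu_1$. This delicate balance is precisely where a standard grafting argument is most likely to fail, and is the step I would scrutinize first (and probably test numerically on medium-size caterpillars and brooms) in any concrete attempt.
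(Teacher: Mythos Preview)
There is a fundamental problem: the statement you are trying to prove is a \emph{conjecture} (attributed to You and Liu), and the paper does not prove it. On the contrary, one of the paper's main points is that the upper bound $R_L(T)\le R_L(P_n)$ is \textbf{false}. In Section~5 the authors compute $R_L$ for all trees on nine vertices and exhibit broom trees $B_9^3$, $B_9^2$, $B_9^4$ with $R_L(B_9^3)>R_L(B_9^2)>R_L(B_9^4)>R_L(P_9)$. They then replace the upper bound by a new conjecture (Conjecture~\ref{con5,1}) with the extremal tree a suitable broom $B_n^{\lfloor (n-3)/2\rfloor}$ rather than the path. So your grafting-toward-$P_n$ strategy could never have succeeded: the monotonicity step you flagged as ``the main obstacle'' is not merely delicate---it is genuinely false in general. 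Your instinct to be suspicious of that step, and your remark that you would ``test numerically on medium-size caterpillars and brooms,'' were exactly right; doing so would have uncovered the counterexamples.

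As for the lower bound $R_L(T)\ge R_L(K_{1,n-1})=n$: the paper does not settle this either. It only proves the inequality under additional hypotheses---Theorem~\ref{th4,1} when the diameter is at least roughly $\pi\sqrt{n}/2$, and Theorem~\ref{th4,2} when the maximum degree is large---by combining the Zagreb-index lower bound on $\mu_1$ (Lemma~\ref{le2,5}) with upper bounds on $\mu_{n-1}$. Your outline for the lower bound (pairing $\mu_1\ge\Delta+1$ with a refinement of $\mu_{n-1}\le 1$) is reasonable in spirit, but the ``hope that these two refinements trade off exactly'' is not substantiated, and the paper's partial results suggest that a clean argument covering all trees is still missing. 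The lower bound is retained as the left half of the new Conjecture~\ref{con5,1} and remains open.
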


\begin{theorem}{\bf (\cite{YL})}\label{th1,1} 
Let $T\neq K_{1, \, n-1}$ be a tree with $n\geq 10$ vertices. If maximum degree $\Delta(T)\geq \lceil \frac{n}{2}\rceil-1$ or diameter $\mathcal{D}(T)\geq \lceil \frac{n}{2}\rceil-1$, then $R_L(T)>R_L(K_{1, \, n-1})$.
\end{theorem}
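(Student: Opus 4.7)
The plan is to prove $R_L(T)>R_L(K_{1,n-1})$. Since the Laplacian spectrum of $K_{1,n-1}$ is $\{n,1^{(n-2)},0\}$, we have $R_L(K_{1,n-1})=n$, so it suffices to show $\mu_1(T)>n\,\mu_{n-1}(T)$. The overall strategy is to pair a lower bound on the Laplacian spectral radius $\mu_1(T)$ with an upper bound on the algebraic connectivity $\mu_{n-1}(T)$ that is forced to be small by the relevant hypothesis, and then to check strictness for $n\geq 10$.

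For the case $\Delta(T)\geq \lceil n/2\rceil-1$, I would first apply the classical inequality $\mu_1(T)\geq \Delta(T)+1$ to obtain $\mu_1(T)\geq \lceil n/2\rceil$. To bound $\mu_{n-1}(T)$ from above, I would use that the maximum-degree vertex $v$, upon deletion, produces $\Delta(T)$ components, at least one of which has at least two vertices since $T\neq K_{1,n-1}$. A Rayleigh-quotient test vector that takes a non-zero value on a small non-trivial branch of $T-v$ and is compensated on the rest so as to be orthogonal to $\mathbf{1}$ should yield $\mu_{n-1}(T)<\tfrac{1}{2}$, giving $n\,\mu_{n-1}(T)<n/2\leq\lceil n/2\rceil\leq \mu_1(T)$ and closing the case.

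For the case $\mathcal{D}(T)\geq \lceil n/2\rceil-1$, the diametral path $v_0v_1\cdots v_{\mathcal{D}}$ provides a Fiedler-type test vector. Setting $f(v_i)=\cos(\pi i/\mathcal{D})$ on the path, extending $f$ constantly along each off-path subtree by the value at its nearest path-vertex (so that no off-path edge contributes to the gradient), and finally recentering $f$ to satisfy $\mathbf{1}^{\top}f=0$, a direct Rayleigh-quotient computation should yield
\[
\mu_{n-1}(T)\leq 2\!\left(1-\cos\frac{\pi}{\mathcal{D}}\right)=O\!\left(\frac{1}{n^{2}}\right).
\]
Paired with $\mu_1(T)\geq \Delta(T)+1\geq 3$, which follows by pigeonhole on the $n-(\mathcal{D}+1)$ off-path vertices distributed among the $\mathcal{D}-1$ internal path vertices (or by direct computation in the exceptional case $T=P_n$), this yields $\mu_1(T)>n\,\mu_{n-1}(T)$ for every $n\geq 10$.

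The main obstacle is keeping the strict inequality in the borderline regime where $\Delta$ or $\mathcal{D}$ equals $\lceil n/2\rceil-1$ exactly and $T$ is as star-like as possible. The recentering of $f$ in the diameter case introduces a lower-order correction in the denominator of the Rayleigh quotient that must be tracked carefully, and the test vector in the maximum-degree case has to be engineered so the bound on $\mu_{n-1}(T)$ stays strictly below $\lceil n/2\rceil/n$. The cutoff $n\geq 10$ in the hypothesis suggests the general bounds close only for $n$ sufficiently large, so a handful of extremal trees (brooms, subdivided stars, short paths) near $n=10$ may need to be verified by explicit computation, possibly using the sharper inequality $\mu_1(T)\geq \max_{uv\in E(T)}\{d(u)+d(v)\}$ to absorb the residual slack.
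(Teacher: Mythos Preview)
The paper does not prove Theorem~\ref{th1,1}; it is quoted from You and Liu~\cite{YL} and no argument for it appears here, so there is no proof in the present paper to compare yours against. The paper's own contribution is the pair of sharpenings Theorems~\ref{th4,1} and~\ref{th4,2}, and your outline is essentially the same strategy used there.

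For the maximum-degree case you pair $\mu_1(T)\geq \Delta+1$ with $\mu_{n-1}(T)<\tfrac12$. The paper's Theorem~\ref{th4,2} does exactly this, except that it takes $\mu_{n-1}(T)<\tfrac12$ as a known fact from~\cite{GMS} rather than building a Rayleigh-quotient test vector, and it uses the stronger lower bound $\mu_1(T)\geq Z_1/(n-1)$ from Lemma~\ref{le2,5} to push the threshold on $\Delta$ below $\lceil n/2\rceil-1$. Your simpler bound $\mu_1\geq\Delta+1$ is already enough for the original You--Liu threshold, so this part of your plan is sound.

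For the diameter case your test-vector construction is a rederivation of Lemma~\ref{le2,8} (Grone--Merris--Sunder), with $\mathcal{D}$ in place of $\mathcal{D}+1$; the paper simply cites that lemma. Your pairing with $\mu_1\geq 3$ is weaker than what the paper does in Theorem~\ref{th4,1}, where $\mu_1\geq Z_1/(n-1)\geq (4n-6)/(n-1)$ is used instead; with the Zagreb bound the inequality closes cleanly at the sharper threshold $\mathcal{D}\gtrsim \tfrac{\pi}{2}\sqrt{n}$ without any case-by-case checks. The recentering issue you flag is the one genuine soft spot in your sketch: after extending $f$ constantly off the diametral path the mean of $f$ need not vanish, and subtracting it can shrink the denominator. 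The paper sidesteps this entirely by invoking Lemma~\ref{le2,8} as a black box.

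In short, your approach is correct in outline and matches the method the paper uses for its own (stronger) results, but since Theorem~\ref{th1,1} is merely cited here there is nothing in this paper to compare your argument to directly.
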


In this paper, we obtain some bounds on Laplacian spectral ratio by rank one perturbation matrix and quotient matrix, and compare with the known results. Moreover, we improve the conditions of the Theorem \ref{th1,1}. For
the upper bound of the Conjecture \ref{con1,1}, we find counter-examples using the numerical results of all trees with nine vertices. Finally, we give the Laplacian spectral ratio of some special trees, and propose a new conjecture on the upper bound of the Laplacian spectral ratio of trees.

\section{\large  Preliminaries}

\begin{lemma}{\bf (\cite{WS})}\label{le2,1} 
Let $M_{n\times n}$ be a positive definite Hermitian matrix, and let
$$r=\frac{tr(M)}{n}, \qquad s^2=\frac{tr(M^2)}{n}-r^2.$$
If $n$ is even, then
$$\frac{\lambda_{\max}(M)}{\lambda_{\min}(M)}\geq 1+\frac{2s}{r-s/\sqrt{n-1}}.$$
If $n$ is odd, then
$$\frac{\lambda_{\max}(M)}{\lambda_{\min}(M)}\geq 1+\frac{2sn/\sqrt{n^2-1}}{r-s/\sqrt{n-1}}.$$
\end{lemma}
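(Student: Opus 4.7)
The plan is to reduce the matrix inequality to a one-variable statement about the multiset of eigenvalues and combine two classical estimates: a Samuelson-type upper bound for $\lambda_{\min}(M)$ and a Popoviciu-type lower bound for the spread $\lambda_{\max}(M)-\lambda_{\min}(M)$. The algebraic identity
$$\frac{\lambda_{\max}(M)}{\lambda_{\min}(M)} \;=\; 1 + \frac{\lambda_{\max}(M)-\lambda_{\min}(M)}{\lambda_{\min}(M)}$$
makes the decomposition explicit: substituting an upper bound for $\lambda_{\min}(M)$ in the denominator and a lower bound for the spread in the numerator, the parity-dependent nature of the Popoviciu bound will then produce exactly the two cases in the statement. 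Throughout I would set $a_i=\lambda_i(M)-r$, so the hypotheses become the moment relations $\sum_i a_i=0$ and $\sum_i a_i^2 = ns^2$; positive definiteness of $M$ is used only through $\lambda_{\min}(M)>0$.

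\textbf{Samuelson-type bound for the denominator.} I would first establish $\lambda_{\min}(M) \leq r - s/\sqrt{n-1}$, equivalently $\min_i a_i \leq -s/\sqrt{n-1}$. This is the solution of the extremal problem of maximizing $\min_i a_i$ under the two moment constraints: the optimum is attained by the $(1,n-1)$-split in which $a_2=\cdots=a_n$ and $a_1$ alone absorbs the first-moment slack, and the scalar equations $a_1+(n-1)a_n=0$, $a_1^2+(n-1)a_n^2=ns^2$ then force $a_n=-s/\sqrt{n-1}$ exactly. The inequality follows because any deviation from this configuration only lowers the smallest entry. As a by-product, combined with $\lambda_{\min}(M)>0$, this guarantees $r-s/\sqrt{n-1}>0$, so the denominator of the target expression is genuinely positive.

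\textbf{Popoviciu spread bound and assembly.} For the numerator I would invoke the variance-spread inequality: $s^2 \leq (\lambda_{\max}-\lambda_{\min})^2/4$ for $n$ even, with the sharper $s^2 \leq (n^2-1)(\lambda_{\max}-\lambda_{\min})^2/(4n^2)$ for $n$ odd. Both follow from the pointwise estimate $(\lambda_i-\lambda_{\min})(\lambda_{\max}-\lambda_i)\geq 0$ summed over $i$, together with the observation that the variance under the extra constraint of fixed $(\lambda_{\max},\lambda_{\min})$ is maximized by clustering all eigenvalues at the two endpoints in some $(k,n-k)$ split, and $k(n-k)/n^2$ attains its maximum $1/4$ when $n$ is even ($k=n/2$) and $(n^2-1)/(4n^2)$ when $n$ is odd ($k=(n\pm 1)/2$). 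Inverting yields $\lambda_{\max}-\lambda_{\min}\geq 2s$ in the even case and $\lambda_{\max}-\lambda_{\min}\geq 2sn/\sqrt{n^2-1}$ in the odd case, and plugging both ingredients into the displayed identity finishes the proof. The main subtlety will be the odd case of the spread bound: the classical Popoviciu estimate $s^2\leq(\lambda_{\max}-\lambda_{\min})^2/4$ is not tight enough there, so one must carry out the discrete maximization of $k(n-k)$ over integer splits carefully in order to pick up the $n^2/(n^2-1)$ correction factor.
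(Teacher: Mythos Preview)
The paper does not prove Lemma~\ref{le2,1}; it simply quotes the result from Wolkowicz and Styan \cite{WS}, so there is no in-paper argument to compare against. Your reconstruction is correct and is in fact the standard route in \cite{WS}: one combines the trace bound $\lambda_{\min}(M)\le r-s/\sqrt{n-1}$ with a lower bound on the spread $\lambda_{\max}-\lambda_{\min}$, and feeds both into the identity $\lambda_{\max}/\lambda_{\min}=1+(\lambda_{\max}-\lambda_{\min})/\lambda_{\min}$.

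Two small points worth tightening. First, your justification of $\lambda_{\min}\le r-s/\sqrt{n-1}$ via ``any deviation only lowers the smallest entry'' is heuristic; a clean one-line derivation is to note $\sum_i(\lambda_i-\lambda_1)(\lambda_i-\lambda_n)\le 0$, which yields $s^2\le(\lambda_1-r)(r-\lambda_n)$, and then combine with the Samuelson bound $\lambda_1-r\le s\sqrt{n-1}$ to get $r-\lambda_n\ge s/\sqrt{n-1}$. Second, for the odd-$n$ spread bound your idea is right but the phrase ``both follow from the pointwise estimate'' undersells what is needed: the product inequality alone only gives $d\ge 2s$. To obtain $d\ge 2sn/\sqrt{n^2-1}$ you really do need the convexity argument you sketch --- the variance $ns^2=\sum\lambda_i^2-\tfrac{1}{n}(\sum\lambda_i)^2$ is a strictly convex function of $(\lambda_2,\dots,\lambda_{n-1})$ on the box $[\lambda_n,\lambda_1]^{n-2}$, hence is maximized at a vertex, and evaluating over the integer splits $k(n-k)/n^2$ then gives the $(n^2-1)/(4n^2)$ factor for odd $n$.
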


Let $M$ be a real symmetric partitioned matrix of order $n$ described in the following block form
\[  \begin{pmatrix}
M_{11} &  \cdots & M_{1t} \\
\vdots & \ddots & \vdots \\
M_{t1} &  \cdots & M_{tt}
\end{pmatrix}, \]
where the diagonal blocks $M_{ii}$ are $n_i\times n_i$ matrices for any $i \in \{1,2,\ldots,t\}$ and $n=n_1+\cdots+n_t$.
For any $i, j \in \{1,2,\ldots,t\}$, $b_{ij}$ is the average row
sum of $M_{ij}$ , i.e. $b_{ij}$ is the sum of all entries in $M_{ij}$ divided by the number of
rows. Then $\mathcal{B}(M)=(b_{ij})$ (simply by $\mathcal{B}$) is called the quotient matrix of $M$.

\begin{lemma}{\bf (\cite{H})}\label{le2,2} 
Let $M$ be a symmetric partitioned matrix of order $n$ with eigenvalues $\xi_1\geq \xi_2 \geq \cdots  \geq \xi_n$, and let $\mathcal{B}$
its quotient matrix with eigenvalues $\eta_1\geq \eta_2 \geq \cdots  \geq \eta_m$ and $n>m$. Then $\xi_i \geq \eta_i \geq \xi_{n-m+i}$
for $i=1, 2, \ldots, m$.
\end{lemma}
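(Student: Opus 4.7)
The plan is to realize the quotient matrix $\mathcal{B}$ (up to similarity) as an orthogonal compression $S^{T}MS$ of $M$, and then invoke the Cauchy interlacing theorem for such compressions. This is the standard Haemers route, and since $M$ is real symmetric, everything stays inside the Courant--Fischer framework.

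First I would build the characteristic matrix of the partition. Let $S$ be the $n\times m$ matrix whose $j$-th column is $\frac{1}{\sqrt{n_j}}\mathbf{1}_{C_j}$, where $C_j$ is the set of rows (equivalently, columns) belonging to the $j$-th block. By construction the columns of $S$ are orthonormal, so $S^{T}S=I_m$. A direct computation gives
\[
(S^{T}MS)_{ij}=\frac{1}{\sqrt{n_i n_j}}\,\mathbf{1}_{C_i}^{T}M\mathbf{1}_{C_j}=\sqrt{\frac{n_i}{n_j}}\,b_{ij},
\]
because $\mathbf{1}_{C_i}^{T}M\mathbf{1}_{C_j}$ is the sum of all entries of $M_{ij}$, which by definition equals $n_i b_{ij}$. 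Setting $D=\diag(\sqrt{n_1},\ldots,\sqrt{n_m})$, this identity reads $S^{T}MS=D\mathcal{B}D^{-1}$, so $S^{T}MS$ and $\mathcal{B}$ are similar and share the same spectrum $\eta_1\geq\cdots\geq\eta_m$.

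Next I would apply the Courant--Fischer characterization to $S^{T}MS$. For any $i\in\{1,\ldots,m\}$,
\[
\eta_i=\max_{\substack{U\leq\mathbb{R}^{m}\\ \dim U=i}}\min_{0\neq y\in U}\frac{y^{T}(S^{T}MS)y}{y^{T}y}=\max_{\substack{U\leq\mathbb{R}^{m}\\ \dim U=i}}\min_{0\neq y\in U}\frac{(Sy)^{T}M(Sy)}{(Sy)^{T}(Sy)},
\]
where the second equality uses $S^{T}S=I_m$. Since $y\mapsto Sy$ is an isometric embedding, the image $SU$ is an $i$-dimensional subspace of $\mathbb{R}^{n}$, so the inner maximum is taken over a restricted family of $i$-dimensional subspaces. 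Hence $\eta_i\leq\xi_i$. Applying the same argument to $-M$ (whose quotient matrix is $-\mathcal{B}$) reverses the order of the eigenvalues and yields $-\eta_{m-i+1}\leq -\xi_{n-(m-i+1)+1}$, i.e.\ $\eta_i\geq\xi_{n-m+i}$. Combining the two inequalities gives the interlacing $\xi_i\geq\eta_i\geq\xi_{n-m+i}$.

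The only delicate point is ensuring the compression identity $S^{T}MS=D\mathcal{B}D^{-1}$, since $\mathcal{B}$ itself need not be symmetric when the blocks have different sizes; this is why the normalization $\frac{1}{\sqrt{n_j}}\mathbf{1}_{C_j}$ (rather than plain $\mathbf{1}_{C_j}$) is essential. Once similarity is established, the interlacing is just the standard Cauchy inequality for an orthogonal compression, so no further obstacle arises.
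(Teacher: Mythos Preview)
Your argument is correct and is exactly the standard Haemers proof: normalize the characteristic matrix $S$ so that $S^{T}S=I_m$, observe $S^{T}MS=D\mathcal{B}D^{-1}$, and invoke Cauchy interlacing via Courant--Fischer. The paper does not supply its own proof of this lemma---it is quoted from \cite{H} as a preliminary---so there is nothing to compare against beyond the original source, which proceeds precisely along your lines. One cosmetic slip: in the step for $-M$ you wrote $-\eta_{m-i+1}\le -\xi_{n-(m-i+1)+1}$, but the direct application at index $i$ gives $-\eta_{m-i+1}\le -\xi_{n-i+1}$; the desired $\eta_i\ge\xi_{n-m+i}$ then follows after the relabeling $i\mapsto m-i+1$. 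This is only an indexing typo and does not affect the validity of the argument.
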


\begin{lemma}{\bf (\cite{GM, K1})}\label{le2,3} 
Let $G$ be a graph with $n$ vertices and at least one edge. Then $\Delta+1\leq \mu_1(G)\leq n$. The left equality for
connected graph holds if and only if $\Delta=n-1$, and the right equality holds if and only if $\overline{G}$ is disconnected.
\end{lemma}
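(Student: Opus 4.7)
The plan is to split the argument into two halves, one for each inequality, and handle the equality cases at the end.

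For the upper bound $\mu_1(G) \leq n$, I would exploit the identity $L(G) + L(\overline{G}) = nI - J$, where $J$ is the all-ones matrix. Since the all-ones vector $\mathbf{1}$ lies in the kernel of both $L(G)$ and $L(\overline{G})$, and $J$ annihilates the orthogonal complement $\mathbf{1}^{\perp}$, restricting the identity to $\mathbf{1}^{\perp}$ gives $L(G) + L(\overline{G}) = nI$ on that $(n-1)$-dimensional subspace. Because both Laplacians are positive semidefinite, this yields $L(G) \preceq nI$ on $\mathbf{1}^{\perp}$. Since $G$ has at least one edge, the largest eigenvalue $\mu_1(G)$ is attained on $\mathbf{1}^{\perp}$, so $\mu_1(G) \leq n$. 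Equality forces the existence of a unit vector in $\mathbf{1}^{\perp}$ that is an eigenvector of $L(G)$ for $n$ and of $L(\overline{G})$ for $0$; this is equivalent to $0$ having multiplicity $\geq 2$ as an eigenvalue of $L(\overline{G})$, i.e.\ to $\overline{G}$ being disconnected.

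For the lower bound $\mu_1(G) \geq \Delta+1$, I would apply the Rayleigh quotient
$$\mu_1(G) = \max_{\mathbf{x}\neq 0} \frac{\mathbf{x}^T L(G) \mathbf{x}}{\mathbf{x}^T\mathbf{x}} = \max_{\mathbf{x}\neq 0} \frac{\sum_{uw\in E(G)}(x_u-x_w)^2}{\sum_{u\in V(G)} x_u^2}$$
to the test vector $\mathbf{x}$ defined by $x_v = -\Delta$, $x_u = 1$ for $u\in N(v)$, and $x_u = 0$ otherwise, where $v$ is a vertex of maximum degree $\Delta$. Each of the $\Delta$ edges incident to $v$ contributes $(\Delta+1)^2$, edges inside $N(v)$ contribute $0$, and each edge from $N(v)$ to $V(G)\setminus N[v]$ contributes $1$. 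With $\|\mathbf{x}\|^2 = \Delta^2 + \Delta = \Delta(\Delta+1)$, this gives
$$\mu_1(G) \geq \Delta+1 + \frac{e(N(v),\,V(G)\setminus N[v])}{\Delta(\Delta+1)} \geq \Delta + 1.$$

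The main obstacle, and the most delicate step, is pinning down when equality holds in the lower bound for a connected graph. One direction is clean: if $\Delta = n-1$, then $v$ is an isolated vertex of $\overline{G}$, so $\overline{G}$ is disconnected and the upper-bound half already yields $\mu_1(G)=n=\Delta+1$. Conversely, if $\mu_1(G) = \Delta+1$, the displayed inequality forces $e(N(v),\,V(G)\setminus N[v]) = 0$; because $G$ is connected, the set $V(G)\setminus N[v]$ must be empty, giving $\Delta = n-1$. Combining the two directions finishes the lemma.
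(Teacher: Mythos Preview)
The paper does not supply its own proof of Lemma~\ref{le2,3}: the result is quoted from the references \cite{GM,K1} in the Preliminaries section and used as a black box. So there is no ``paper's proof'' to compare against.

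That said, your argument is correct and is essentially the standard one found in the cited sources. The upper bound via $L(G)+L(\overline{G})=nI-J$ and the equality characterisation through the nullity of $L(\overline{G})$ are exactly how Kelmans and later authors handle it. For the lower bound, your Rayleigh-quotient computation with the test vector $x_v=-\Delta$, $x_u=1$ on $N(v)$, $x_u=0$ elsewhere is clean, and the chain
\[
\mu_1(G)\;\geq\;\Delta+1+\frac{e\bigl(N(v),\,V(G)\setminus N[v]\bigr)}{\Delta(\Delta+1)}\;\geq\;\Delta+1
\]
immediately forces $e(N(v),V(G)\setminus N[v])=0$ when equality holds, after which connectedness pins down $\Delta=n-1$. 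Both directions of the equality case are handled correctly. Nothing is missing.
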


\begin{lemma}{\bf (\cite{F})}\label{le2,4} 
Let $G$ be not a complete graph with $n$ vertices. Then $\mu_{n-1}(G)\leq \delta$.
\end{lemma}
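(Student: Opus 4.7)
The plan is to deduce the bound from Lemma \ref{le2,3} applied to the complement graph $\overline{G}$, by exploiting the standard identity relating the Laplacian eigenvalues of $G$ and $\overline{G}$.

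First I would recall that $L(G)+L(\overline{G})=nI-J$, where $J$ denotes the all-ones matrix. Since the all-ones vector is a common eigenvector (with eigenvalue $0$ for both Laplacians) and each Laplacian is symmetric with this vector spanning a joint invariant subspace, one obtains on the orthogonal complement of $\mathbf{1}$ the relation
\[
\mu_i(G)+\mu_{n-i}(\overline{G})=n \quad \text{for } i=1,2,\ldots,n-1,
\]
where the Laplacian eigenvalues are arranged as $\mu_1\geq \mu_2 \geq \cdots \geq \mu_{n-1} \geq \mu_n=0$. Specializing to $i=n-1$ gives the key identity $\mu_{n-1}(G)=n-\mu_1(\overline{G})$.

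Next, because $G$ is not a complete graph, $\overline{G}$ contains at least one edge, so Lemma \ref{le2,3} applies to $\overline{G}$ and yields $\mu_1(\overline{G})\geq \Delta(\overline{G})+1$. Using the elementary degree identity $\Delta(\overline{G})=n-1-\delta(G)$, this becomes
\[
\mu_1(\overline{G}) \geq n-\delta.
\]
Substituting into the identity above gives $\mu_{n-1}(G)=n-\mu_1(\overline{G})\leq n-(n-\delta)=\delta$, which is the desired inequality.

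There is no real obstacle here: the only point that requires a moment of care is justifying the eigenvalue identity $\mu_i(G)+\mu_{n-i}(\overline{G})=n$, but this follows directly by diagonalizing $L(G)$ and $L(\overline{G})$ simultaneously on the $(n-1)$-dimensional subspace $\mathbf{1}^{\perp}$, on which $nI-J$ acts as $nI$. Once this is in hand, the rest is a one-line computation combining Lemma \ref{le2,3} with the degree complementation formula.
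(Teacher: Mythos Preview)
The paper does not give its own proof of this lemma; it is quoted from Fiedler as a preliminary result, so there is nothing to compare against. Your argument is correct and is the standard one: the complement identity you derive is precisely Lemma~\ref{le2,7} of the paper, and combining it with Lemma~\ref{le2,3} applied to $\overline{G}$ (which has an edge since $G\neq K_n$) together with $\Delta(\overline{G})=n-1-\delta(G)$ yields $\mu_{n-1}(G)=n-\mu_1(\overline{G})\leq n-(n-\delta)=\delta$ exactly as you wrote.
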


\begin{lemma}{\bf (\cite{Z})}\label{le2,5} 
Let $G$ be a bipartite graph with $m\geq 1$ edges. Then $\mu_1(G)\geq \frac{Z_1}{m}$.
\end{lemma}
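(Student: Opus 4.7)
The plan is to leverage the well-known fact that for a bipartite graph $G$, the Laplacian $L(G) = D(G)-A(G)$ and the signless Laplacian $Q(G) = D(G)+A(G)$ are similar via the diagonal sign matrix associated with the bipartition; consequently they share the same spectrum, and in particular $\mu_1(G) = q_1(G)$, where $q_1(G)$ denotes the largest eigenvalue of $Q(G)$. This reduction lets me work with $Q(G)$, whose quadratic form has the convenient edge expression $x^T Q(G) x = \sum_{uv \in E(G)}(x_u + x_v)^2$.

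Next I would apply the Rayleigh principle to $Q(G)$ using the degree vector $x = (d(v))_{v \in V(G)}$ as a test vector. This yields
\[
\mu_1(G) \;=\; q_1(G) \;\geq\; \frac{x^T Q(G) x}{x^T x} \;=\; \frac{\sum_{uv \in E(G)} (d(u) + d(v))^2}{\sum_{v \in V(G)} d(v)^2} \;=\; \frac{\sum_{uv \in E(G)} (d(u) + d(v))^2}{Z_1}.
\]
The denominator is $Z_1$ by definition of the first Zagreb index.

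The final step is to bound the numerator from below. The key identity is the edge-to-vertex handshaking relation
\[
\sum_{uv \in E(G)} (d(u)+d(v)) \;=\; \sum_{v \in V(G)} d(v)^2 \;=\; Z_1,
\]
which holds because each vertex $v$ is counted once per incident edge. Then Cauchy--Schwarz on the $m$ edge-terms gives
\[
\sum_{uv \in E(G)} (d(u)+d(v))^2 \;\geq\; \frac{1}{m}\left(\sum_{uv \in E(G)}(d(u)+d(v))\right)^2 \;=\; \frac{Z_1^2}{m}.
\]
Substituting back into the Rayleigh estimate produces $\mu_1(G) \geq Z_1^2/(m Z_1) = Z_1/m$, as desired. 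There is no real obstacle here — the only insight needed is to combine the bipartite spectral coincidence with the correct Rayleigh test vector; the rest is a one-line Cauchy--Schwarz estimate.
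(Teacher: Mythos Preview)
Your argument is correct. Note, however, that the paper does not supply its own proof of this lemma: it is quoted verbatim from \cite{Z} (Zhou, \emph{Linear Algebra Appl.}\ 432 (2010) 566--570), where the inequality $q_1(G)\ge Z_1/m$ is established for the signless Laplacian spectral radius of an arbitrary graph and then specialised to bipartite graphs via the identity $\mu_1=q_1$. Your proof follows exactly that route --- the similarity $L(G)\sim Q(G)$ for bipartite $G$, the Rayleigh quotient with the degree vector, the handshake identity $\sum_{uv\in E}(d(u)+d(v))=Z_1$, and Cauchy--Schwarz over the $m$ edge terms --- so there is nothing to contrast: this is the standard derivation, and each step is sound (the hypothesis $m\ge 1$ guarantees the test vector is nonzero).
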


\begin{lemma}{\bf (\cite{WS})}\label{le2,6} 
Let $M_{n\times n}$ be a positive definite Hermitian matrix, and let
$$r=\frac{tr(M)}{n}, \qquad s^2=\frac{tr(M^2)}{n}-r^2.$$
Then
$$\frac{\lambda_{\max}(M)}{\lambda_{\min}(M)}\leq 1+\frac{s\sqrt{2n}(r+s/\sqrt{n-1})^{n-1}}{\det M}.$$
\end{lemma}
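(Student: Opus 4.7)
The plan is to denote the eigenvalues of $M$ by $\lambda_1\geq\lambda_2\geq\cdots\geq\lambda_n>0$ and to exploit the three identities $\sum_i\lambda_i=nr$, $\sum_i\lambda_i^2=n(r^2+s^2)$, and $\prod_i\lambda_i=\det M$. Writing $\frac{\lambda_1}{\lambda_n}-1=\frac{\lambda_1-\lambda_n}{\lambda_n}$, the task splits cleanly into two pieces: an upper bound on the spread $\lambda_1-\lambda_n$ (which should give the factor $s\sqrt{2n}$) and a lower bound on $\lambda_n$ (which should give the factor $\det M/(r+s/\sqrt{n-1})^{n-1}$).

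For the spread, I would set $x_i=\lambda_i-r$, so that $\sum x_i=0$ and $\sum x_i^2=ns^2$. In particular $(\lambda_1-r)^2+(r-\lambda_n)^2\leq ns^2$, and the elementary inequality $(a+b)^2\leq 2(a^2+b^2)$ applied with $a=\lambda_1-r$, $b=r-\lambda_n$ immediately gives $\lambda_1-\lambda_n\leq s\sqrt{2n}$. The same variance identity combined with Cauchy-Schwarz applied to $x_n=-\sum_{i<n}x_i$ yields $x_n^2\leq(n-1)(ns^2-x_n^2)$, hence the classical extremal bound $\lambda_n\geq r-s\sqrt{n-1}$.

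To lower-bound $\lambda_n$, I would use the previous step to estimate $\sum_{i<n}\lambda_i=nr-\lambda_n\leq(n-1)r+s\sqrt{n-1}$ and then apply AM-GM to $\lambda_1,\ldots,\lambda_{n-1}$ to obtain
\[
\prod_{i<n}\lambda_i\leq\left(\frac{nr-\lambda_n}{n-1}\right)^{n-1}\leq\left(r+\frac{s}{\sqrt{n-1}}\right)^{n-1}.
\]
Since $\det M=\lambda_n\prod_{i<n}\lambda_i$, this rearranges to $\lambda_n\geq \det M/(r+s/\sqrt{n-1})^{n-1}$. Substituting this together with $\lambda_1-\lambda_n\leq s\sqrt{2n}$ into $(\lambda_1-\lambda_n)/\lambda_n$ yields the asserted inequality.

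The calculation is essentially routine; the one step that requires a moment of thought is that one must use the \emph{lower} Wolkowicz-Styan bound $\lambda_n\geq r-s\sqrt{n-1}$ in order to convert the AM-GM upper bound on $\prod_{i<n}\lambda_i$ into the ``other'' Wolkowicz-Styan quantity $r+s/\sqrt{n-1}$ that appears in the statement. This is the only non-obvious coupling in the argument, and once it is spotted the rest is straightforward algebra.
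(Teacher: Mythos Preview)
The paper does not supply a proof of this lemma; it merely quotes the result from Wolkowicz and Styan~\cite{WS}. Your argument is correct and is essentially the proof given in that reference: the spread estimate $\lambda_1-\lambda_n\le s\sqrt{2n}$ and the extremal bound $\lambda_n\ge r-s\sqrt{n-1}$ are exactly the trace inequalities established there, and feeding the latter into AM--GM on $\lambda_1,\ldots,\lambda_{n-1}$ to get $\prod_{i<n}\lambda_i\le (r+s/\sqrt{n-1})^{n-1}$, hence $\lambda_n\ge \det M/(r+s/\sqrt{n-1})^{n-1}$, is precisely the step Wolkowicz and Styan use to obtain the stated upper bound on the ratio.
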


\begin{lemma}{\bf (\cite{M1})}\label{le2,7} 
Let $\overline{G}$ be the complement of the graph $G$ with $n$ vertices. The Laplacian eigenvalues of $\overline{G}$ are
$$n-\mu_{n-1}(G)\geq n-\mu_{n-2}(G)\geq \cdots \geq n-\mu_1(G)\geq 0.$$
\end{lemma}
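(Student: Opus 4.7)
The plan is to exploit the basic identity $L(G) + L(\overline{G}) = nI - J$, where $J$ is the all-ones matrix. First I would derive this by writing $L(G) = D(G) - A(G)$ and observing two elementary facts: the degree sequences satisfy $D(G) + D(\overline{G}) = (n-1)I$, since every vertex is adjacent to the remaining $n-1$ vertices across $G$ and $\overline{G}$ together; and the adjacency matrices satisfy $A(G) + A(\overline{G}) = J - I$, since the off-diagonal entries partition edges of $K_n$. Combining these gives $L(G) + L(\overline{G}) = (n-1)I - (J-I) = nI - J$.

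Next, I would use the fact that the all-ones vector $\mathbf{1}$ lies in the kernel of every Laplacian matrix, so it is a common eigenvector of $L(G)$ and $L(\overline{G})$ with eigenvalue $0$. Since $L(G)$ is real symmetric, the spectral theorem provides an orthonormal basis $\{v_1,v_2,\ldots,v_{n-1},v_n = \mathbf{1}/\sqrt{n}\}$ of eigenvectors, where $L(G)v_i = \mu_i(G)v_i$ for $i=1,\ldots,n$, with $\mu_n(G)=0$.

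The main step is to observe that $J v_i = 0$ for $i = 1, \ldots, n-1$, because each such $v_i$ is orthogonal to $\mathbf{1}$. Applying the identity $L(\overline{G}) = nI - J - L(G)$ to $v_i$ thus yields
$$L(\overline{G})v_i = nv_i - 0 - \mu_i(G)v_i = \bigl(n - \mu_i(G)\bigr)v_i,$$
so each $v_i$ with $i \le n-1$ is an eigenvector of $L(\overline{G})$ with eigenvalue $n - \mu_i(G)$. Together with the eigenvalue $0$ corresponding to $\mathbf{1}$, the full Laplacian spectrum of $\overline{G}$ consists of $\{0\}\cup \{n-\mu_i(G): i=1,\ldots,n-1\}$. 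Sorting in decreasing order (noting that $\mu_1(G) \ge \cdots \ge \mu_{n-1}(G) \ge 0$ so $n - \mu_{n-1}(G) \ge \cdots \ge n - \mu_1(G) \ge 0$) gives exactly the claimed ordering.

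There is essentially no obstacle here; the only point requiring a little care is the justification that one may choose a common eigenbasis, which is immediate from the observation that $L(G)$ and $L(\overline{G})$ differ, on the orthogonal complement of $\mathbf{1}$, only by the scalar matrix $nI$ (since $J$ vanishes there). Hence any eigenbasis for $L(G)$ extending $\mathbf{1}$ automatically diagonalizes $L(\overline{G})$, and the eigenvalues transform by $\mu \mapsto n - \mu$.
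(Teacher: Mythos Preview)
Your argument is correct and is the standard proof of this well-known fact: the identity $L(G)+L(\overline{G})=nI-J$ together with the observation that $J$ vanishes on $\mathbf{1}^\perp$ immediately gives the eigenvalue correspondence $\mu\mapsto n-\mu$ on that subspace. The only slightly loose phrase is the parenthetical justification for $n-\mu_1(G)\ge 0$; this does not follow from $\mu_{n-1}(G)\ge 0$ but rather from the positive semidefiniteness of $L(\overline{G})$ (equivalently $\mu_1(G)\le n$), which you have in fact established since $n-\mu_1(G)$ is a Laplacian eigenvalue.

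As for comparison: the paper does not supply its own proof of this lemma. It is quoted as a known result from Merris' survey \cite{M1} and used as a black box in the proof of Theorem~\ref{th3,4}. Your write-up is therefore more than what the paper provides.
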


\begin{lemma}{\bf (\cite{GMS})}\label{le2,8} 
If $T$ is a tree with diameter $\mathcal{D}$, then $\mu_{n-1}(T)\leq 2\left(1-\cos\frac{\pi}{\mathcal{D}+1}\right)$.
\end{lemma}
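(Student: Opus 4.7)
The right-hand side $2(1-\cos(\pi/(\mathcal{D}+1)))$ is precisely the algebraic connectivity $\mu_{n-1}(P_{\mathcal{D}+1})$ of the path on $\mathcal{D}+1$ vertices, so the natural strategy is to transplant a Fiedler vector of a diametral path onto $T$ and then apply the Rayleigh-quotient characterization of $\mu_{n-1}(T)$.

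First, I would fix a diametral path $P=u_0u_1\cdots u_{\mathcal{D}}$ in $T$. Because $T$ is a tree, every vertex $v\in V(T)$ has a unique closest vertex on $P$, so there is a well-defined index $\phi(v)\in\{0,1,\ldots,\mathcal{D}\}$. I would then take the closed-form Fiedler vector of $P_{\mathcal{D}+1}$, namely $f_i=\cos\frac{(2i+1)\pi}{2(\mathcal{D}+1)}$ for $i=0,\ldots,\mathcal{D}$, which is well known to satisfy $\sum_i f_i=0$ and $\sum_{i=0}^{\mathcal{D}-1}(f_i-f_{i+1})^2=\mu_{n-1}(P_{\mathcal{D}+1})\sum_i f_i^2$, and define a test vector $x$ on $V(T)$ by $x_v=f_{\phi(v)}$.

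The key feature of $x$ is that it is constant on every branch hanging off $P$. Consequently, any edge lying inside such a branch contributes $0$ to $x^{T}L(T)x$, leaving only the contributions from the path edges $u_iu_{i+1}$, which yields $x^{T}L(T)x=\mu_{n-1}(P_{\mathcal{D}+1})\sum_i f_i^2$. Since $x$ need not be orthogonal to $\mathbf{1}$, I would pass to $x-\bar{x}\mathbf{1}$ with $\bar{x}=n^{-1}\sum_v x_v$ and invoke the variational bound
$$\mu_{n-1}(T)\leq\frac{x^{T}L(T)x}{\|x\|^2-n\bar{x}^2}.$$

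The crux of the argument, and the main obstacle, is verifying $\|x\|^2-n\bar{x}^2\geq\sum_i f_i^2$, which is what converts the above Rayleigh inequality into the claimed bound. Setting $n_i:=|\phi^{-1}(i)|\geq 1$ and using $\sum_i f_i=0$ to rewrite $\sum_v x_v=\sum_i n_if_i$ as $\sum_i(n_i-1)f_i$, the required estimate reduces to $\sum_i(n_i-1)f_i^2\geq n^{-1}\bigl(\sum_i(n_i-1)f_i\bigr)^2$. Since $n_i-1\geq 0$ and $\sum_i(n_i-1)=n-(\mathcal{D}+1)\leq n$, this follows from a single application of Cauchy--Schwarz, closing the argument.
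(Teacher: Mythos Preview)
The paper does not supply a proof of this lemma; it is quoted from \cite{GMS} as a preliminary result, so there is nothing in the paper to compare your argument against. That said, your proposal is correct: transplanting the Fiedler vector of $P_{\mathcal{D}+1}$ along a diametral path, observing that all non-path edges contribute zero to the quadratic form, and then handling the centering correction $\|x\|^2-n\bar{x}^2\geq\sum_i f_i^2$ via the Cauchy--Schwarz step you describe is precisely the standard route to this inequality, and each step is valid as stated.
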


\section{\large Bounds on Laplacian spectral ratio}

\begin{theorem}\label{th3,1} 
Let $G$ be a connected graph with $n$ vertices, $m$ edges and the first Zagreb index $Z_1$. If $n$ is even, then
$$R_L(G)\geq 1+\frac{2\sqrt{(n-1)[n^2(n-1)\alpha^2-4mn\alpha+nZ_1+2mn-4m^2]}}{(2m+n\alpha)\sqrt{n-1}-\sqrt{n^2(n-1)\alpha^2-4mn\alpha+nZ_1+2mn-4m^2}},$$
where $\alpha$ is a real number and $\mu_{n-1}\leq n\alpha \leq \mu_1$.
If $n$ is odd, then
$$R_L(G)\geq 1+\frac{2n\sqrt{n^2(n-1)\alpha^2-4mn\alpha+nZ_1+2mn-4m^2}}{(2m+n\alpha)\sqrt{n^2-1}-\sqrt{(n+1)[n^2(n-1)\alpha^2-4mn\alpha+nZ_1+2mn-4m^2]}},$$
where $\alpha$ is a real number and $\mu_{n-1}\leq n\alpha \leq \mu_1$.
\end{theorem}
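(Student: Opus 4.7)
The plan is to manufacture a positive definite matrix out of $L(G)$ whose extreme eigenvalues are exactly $\mu_1(G)$ and $\mu_{n-1}(G)$, and then apply Lemma \ref{le2,1}. The natural candidate is the rank-one perturbation
$$M=L(G)+\alpha J,$$
where $J$ is the all-ones matrix of order $n$. Since $L(G)\mathbf{1}=\mathbf{0}$, the matrices $L(G)$ and $J$ commute and share the eigenvector $\mathbf{1}$. Hence the spectrum of $M$ is obtained by keeping $\mu_1(G),\mu_2(G),\ldots,\mu_{n-1}(G)$ (coming from the $\mathbf{1}^{\perp}$ directions, on which $J$ acts as $0$) and replacing the eigenvalue $0$ along $\mathbf{1}$ by $n\alpha$. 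Under the hypothesis $\mu_{n-1}(G)\le n\alpha\le \mu_1(G)$, this gives $\lambda_{\max}(M)=\mu_1(G)$ and $\lambda_{\min}(M)=\mu_{n-1}(G)>0$, so $M$ is positive definite Hermitian and $\lambda_{\max}(M)/\lambda_{\min}(M)=R_L(G)$.

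Next I would compute the two scalars $r$ and $s$ appearing in Lemma \ref{le2,1}. From $\mathrm{tr}(L(G))=2m$ and $\mathrm{tr}(J)=n$,
$$r=\frac{\mathrm{tr}(M)}{n}=\frac{2m+n\alpha}{n}.$$
For $\mathrm{tr}(M^2)$, I use $L(G)J=JL(G)=0$ and $J^2=nJ$ to get
$$M^2=L(G)^2+\alpha^2 nJ,$$
and together with the standard identity $\mathrm{tr}(L(G)^2)=Z_1+2m$ this yields $\mathrm{tr}(M^2)=Z_1+2m+n^2\alpha^2$. Substituting,
$$n^2s^2=n^2\Bigl(\tfrac{\mathrm{tr}(M^2)}{n}-r^2\Bigr)=n^2(n-1)\alpha^2-4mn\alpha+nZ_1+2mn-4m^2.$$

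Finally, I would plug $r$ and $s$ into the two inequalities of Lemma \ref{le2,1} according to the parity of $n$, and simplify. For $n$ even, multiplying numerator and denominator of $2s/(r-s/\sqrt{n-1})$ by $n\sqrt{n-1}$ produces
$$\frac{2\sqrt{n-1}\cdot ns}{(2m+n\alpha)\sqrt{n-1}-ns},$$
which, after substituting the expression for $ns$, is precisely the stated bound in the even case. For $n$ odd, the analogous manipulation uses $\sqrt{n-1}/\sqrt{n^2-1}=1/\sqrt{n+1}$ to rewrite the numerator as $2n\cdot ns$ and the second term in the denominator as $\sqrt{n+1}\cdot ns$, giving the odd-case formula.

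The conceptual step (choosing the right rank-one shift so that $L(G)$ becomes positive definite while preserving the two eigenvalues of interest) is short; the main obstacle I anticipate is the book-keeping in the algebraic reduction of Lemma \ref{le2,1}'s bound into the form stated in the theorem, particularly matching the odd-$n$ denominator $(2m+n\alpha)\sqrt{n^2-1}-\sqrt{(n+1)[\,\cdots\,]}$ after clearing the $\sqrt{n^2-1}$ factor.
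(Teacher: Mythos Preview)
Your proposal is correct and follows exactly the paper's approach: form $M=L(G)+\alpha J$, identify its spectrum as $\mu_1,\ldots,\mu_{n-1},n\alpha$ so that $\lambda_{\max}(M)/\lambda_{\min}(M)=R_L(G)$ under the stated constraint on $\alpha$, compute $\mathrm{tr}(M)=2m+n\alpha$ and $\mathrm{tr}(M^2)=Z_1+2m+n^2\alpha^2$, and plug into Lemma~\ref{le2,1}. The only cosmetic difference is that you obtain $\mathrm{tr}(M^2)$ via $M^2=L(G)^2+n\alpha^2 J$ and $\mathrm{tr}(L(G)^2)=Z_1+2m$, whereas the paper expands the entries of $M$ directly; the resulting algebra is identical.
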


\begin{proof} We consider the matrix $M=L(G)+\alpha J$, where $J$ is the all-ones matrix. Then
$$m_{ij}=
\begin{dcases}
d_i+\alpha, & \text{if}\,\, i=j ;\\
-1+\alpha, & \text{if}\,\, v_iv_j\in E(G);\\
\alpha, & \text{if}\,\, v_iv_j\notin E(G).
\end{dcases}
$$
Thus we have
\begin{eqnarray*}
tr(M) & = & \sum_{i=1}^{n}(d_i+\alpha)=2m+n\alpha,\\
tr(M^2) & = & \sum_{i=1}^{n}[(d_i+\alpha)^2+d_i(-1+\alpha)^2+(n-1-d_i)\alpha^2]=Z_1+2m+n^2\alpha^2.
\end{eqnarray*}
Moreover, the eigenvalues of $M$ are $\mu_1, \mu_2, \ldots, \mu_{n-1}, \alpha n$. Since $\mu_{n-1}\leq n\alpha \leq \mu_1$, we have that $M$ is a positive definite Hermitian matrix, and $R_L(G)=\frac{\lambda_{\max}(M)}{\lambda_{\min}(M)}$.

If $n$ is even, by Lemma \ref{le2,1}, we have
$$\frac{\lambda_{\max}(M)}{\lambda_{\min}(M)}\geq 1+\frac{2\sqrt{(n-1)[n^2(n-1)\alpha^2-4mn\alpha+nZ_1+2mn-4m^2]}}{(2m+n\alpha)\sqrt{n-1}-\sqrt{n^2(n-1)\alpha^2-4mn\alpha+nZ_1+2mn-4m^2}},$$
that is,
$$R_L(G)\geq 1+\frac{2\sqrt{(n-1)[n^2(n-1)\alpha^2-4mn\alpha+nZ_1+2mn-4m^2]}}{(2m+n\alpha)\sqrt{n-1}-\sqrt{n^2(n-1)\alpha^2-4mn\alpha+nZ_1+2mn-4m^2}}.$$

If $n$ is odd, by Lemma \ref{le2,1}, we have
$$\frac{\lambda_{\max}(M)}{\lambda_{\min}(M)}\geq 1+\frac{2n\sqrt{n^2(n-1)\alpha^2-4mn\alpha+nZ_1+2mn-4m^2}}{(2m+n\alpha)\sqrt{n^2-1}-\sqrt{(n+1)[n^2(n-1)\alpha^2-4mn\alpha+nZ_1+2mn-4m^2]}},$$
that is,
$$R_L(G)\geq 1+\frac{2n\sqrt{n^2(n-1)\alpha^2-4mn\alpha+nZ_1+2mn-4m^2}}{(2m+n\alpha)\sqrt{n^2-1}-\sqrt{(n+1)[n^2(n-1)\alpha^2-4mn\alpha+nZ_1+2mn-4m^2]}}.$$
This completes the proof. $\Box$
\end{proof}

\begin{corollary}\label{cor3,1} 
Let $G$ be a connected bipartite graph with $n$ vertices and $m$ edges. If $n$ is even, then
$$R_L(G)\geq 1+\frac{2\sqrt{(n-1)[(n-1)Z_1^2-4m^2Z_1+nm^2Z_1+2nm^3-4m^4]}}{(2m^2+Z_1)\sqrt{n-1}-\sqrt{(n-1)Z_1^2-4m^2Z_1+nm^2Z_1+2nm^3-4m^4}}.$$
If $n$ is odd, then
$$R_L(G)\geq 1+\frac{2n\sqrt{(n-1)Z_1^2-4m^2Z_1+nm^2Z_1+2nm^3-4m^4}}{(2m^2+Z_1)\sqrt{n^2-1}-\sqrt{(n+1)[(n-1)Z_1^2-4m^2Z_1+nm^2Z_1+2nm^3-4m^4]}}.$$
\end{corollary}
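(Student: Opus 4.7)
The plan is to derive the corollary as a specialization of Theorem \ref{th3,1} obtained by plugging in a cleverly chosen value of the free parameter $\alpha$. Inspecting the two formulas side-by-side, the corollary's expressions $2m^2+Z_1$ and $(n-1)Z_1^2-4m^2 Z_1+nm^2 Z_1+2nm^3-4m^4$ arise from Theorem \ref{th3,1}'s expressions $2m+n\alpha$ and $n^2(n-1)\alpha^2-4mn\alpha+nZ_1+2mn-4m^2$ by setting $n\alpha=Z_1/m$ and multiplying through by $m^2$. So the first step is to fix $\alpha=\frac{Z_1}{mn}$.

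The key point is then to verify the hypothesis $\mu_{n-1}(G)\leq n\alpha\leq \mu_1(G)$, i.e., $\mu_{n-1}(G)\leq \frac{Z_1}{m}\leq \mu_1(G)$. The upper side is handled immediately by Lemma \ref{le2,5}, which is precisely the bipartite bound $\mu_1(G)\geq Z_1/m$; this is the reason the corollary is restricted to bipartite graphs. For the lower side, assuming $G$ is not complete, Lemma \ref{le2,4} gives $\mu_{n-1}(G)\leq \delta$, and the elementary estimate $Z_1=\sum_i d_i^2\geq \delta\sum_i d_i = 2m\delta$ yields $Z_1/m\geq 2\delta\geq \delta\geq \mu_{n-1}(G)$. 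The only connected bipartite graph that is complete is $K_2$, for which $\mu_1=\mu_{n-1}=2=Z_1/m$, and $n\alpha = Z_1/m = 2$ still satisfies the required inequalities with equality; in that degenerate case $R_L(G)=1$ and both sides of the claimed bound can be checked by hand.

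The remaining work is purely algebraic substitution into the two formulas of Theorem \ref{th3,1}. With $n\alpha=Z_1/m$ one computes $2m+n\alpha=(2m^2+Z_1)/m$, and
\[
n^2(n-1)\alpha^2-4mn\alpha+nZ_1+2mn-4m^2=\frac{(n-1)Z_1^2-4m^2Z_1+nm^2Z_1+2nm^3-4m^4}{m^2}.
\]
Multiplying numerator and denominator of the bound in Theorem \ref{th3,1} by $m$ (in the even case) or by $m$ inside the square roots in the appropriate way (in the odd case) clears the $m$'s and produces exactly the expressions stated in the corollary.

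I do not foresee a genuine obstacle; the entire argument is a verification. The only subtle point is that the parameter choice $\alpha=Z_1/(mn)$ is legitimate only because bipartiteness has been assumed (through Lemma \ref{le2,5}), which is why the corollary, unlike its parent theorem, carries this hypothesis. Once that is articulated, the computation reduces to a straightforward rescaling.
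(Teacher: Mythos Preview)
Your proposal is correct and follows exactly the paper's approach: set $\alpha=\dfrac{Z_1}{mn}$ in Theorem~\ref{th3,1}, invoke Lemma~\ref{le2,5} to justify the admissibility condition $n\alpha\le\mu_1$, and simplify. You have in fact been more careful than the paper, which omits the verification of the lower inequality $\mu_{n-1}\le n\alpha$ that you supply via Lemma~\ref{le2,4} and the estimate $Z_1\ge 2m\delta$.
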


\begin{proof}
By Lemma \ref{le2,5}, we take $\alpha=\frac{Z_1}{mn}$ in Theorem \ref{th3,1}, the result follows.  $\Box$
\end{proof}

\begin{theorem}\label{th3,2} 
Let $G$ be a connected $k$-regular triangle-free graph with $n$ vertices. Then
$$R_L(G)\geq \frac{2kn-k^2-3k+\sqrt{4kn^2-4k(3k+1)n+k^4+6k^3+9k^2}}{2kn-k^2-3k-\sqrt{4kn^2-4k(3k+1)n+k^4+6k^3+9k^2}}.\eqno{(3.1)}$$
The equality holds if $G$ is a Petersen graph.
\end{theorem}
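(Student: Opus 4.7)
The plan is to apply the quotient-matrix interlacing of Lemma~\ref{le2,2} to a $3$-block vertex partition of $L(G)$ designed to exploit both the regularity and the triangle-free hypothesis simultaneously. Fix any vertex $v$ and partition $V(G) = V_1 \cup V_2 \cup V_3$ with $V_1 = \{v\}$, $V_2 = N(v)$, and $V_3 = V(G)\setminus(V_1\cup V_2)$. Because $G$ is triangle-free, $V_2$ is an independent set; because $G$ is $k$-regular, each vertex of $V_2$ has its one neighbor $v$ in $V_1$ and exactly $k-1$ neighbors in $V_3$, so $|E(V_2,V_3)| = k(k-1)$ and $n\ge 2k$, ensuring $V_3$ is nonempty for $k\ge 2$.

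These structural facts immediately yield the quotient matrix
$$\mathcal{B} = \begin{pmatrix} k & -k & 0 \\ -1 & k & -(k-1) \\ 0 & -a & a \end{pmatrix}, \qquad a = \frac{k(k-1)}{n-k-1},$$
whose rows all sum to $0$, so $0$ is one eigenvalue. The remaining two eigenvalues $\eta_1 \ge \eta_2$ are the roots of
$$\eta^2 - (2k+a)\eta + \bigl(k^2 - k + a(k+1)\bigr) = 0,$$
both of which are positive since their sum $2k+a$ and product $k^2-k+a(k+1)$ are positive. Lemma~\ref{le2,2} with $m=3$ then gives $\mu_1(G) \ge \eta_1$ and $\mu_{n-1}(G) \le \eta_2$, and therefore $R_L(G) \ge \eta_1/\eta_2$.

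To put $\eta_1/\eta_2$ into the stated closed form, I write it as $\frac{(2k+a)+\sqrt{(2k+a)^2-4(k^2-k+a(k+1))}}{(2k+a)-\sqrt{(2k+a)^2-4(k^2-k+a(k+1))}}$ and multiply numerator and denominator by $n-k-1$. The linear piece gives $(n-k-1)(2k+a) = 2kn - k^2 - 3k$, so the only substantive algebraic step is the identity
$$(n-k-1)^2(a^2 - 4a + 4k) = 4kn^2 - 4k(3k+1)n + k^4 + 6k^3 + 9k^2,$$
which follows from a direct (but bookkeeping-heavy) expansion after substituting $a=k(k-1)/(n-k-1)$; this is the main computational obstacle. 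For the Petersen graph ($n=10$, $k=3$), distance-regularity forces every vertex of $V_3$ to have exactly one neighbor in $V_2$, so the above partition is equitable and the eigenvalues of $\mathcal{B}$ are actual eigenvalues of $L(G)$. The quadratic becomes $\eta^2 - 7\eta + 10 = (\eta-5)(\eta-2)$; since the Petersen Laplacian spectrum is $\{0,2^{(5)},5^{(4)}\}$, this matches $\mu_1=\eta_1=5$ and $\mu_{n-1}=\eta_2=2$, so the bound is attained.
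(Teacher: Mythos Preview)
Your proof is correct and follows essentially the same route as the paper: the same three-block partition $\{v\}\cup N(v)\cup (V\setminus(\{v\}\cup N(v)))$, the same quotient matrix, and the same application of Lemma~\ref{le2,2} to obtain $\mu_1\ge\eta_1$ and $\mu_{n-1}\le\eta_2$. Your handling of the equality case for the Petersen graph (observing equitability and checking the Laplacian spectrum explicitly) is in fact more thorough than the paper, which states the equality claim but does not verify it in the proof.
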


\begin{proof} Let $\mathcal{B}(G)$ be the quotient matrix of $L(G)$ corresponding to the partition $V(G)=\{v_1\}\cup N(v_1) \cup V(G)\setminus (\{v_1\}\cup N(v_1))$ of $G$. Then
\begin{equation*}
\mathcal{B}(G)=\left(
  \begin{array}{ccccccccccc}
  k & -k & 0\\
 -1 & k & -(k-1)\\
  0 & \frac{-k(k-1)}{n-k-1} & \frac{k(k-1)}{n-k-1}\\
\end{array}
\right). \tag{3.2}
\end{equation*}
By direct computation the characteristic polynomial of (3.2) is
$$\det(xI_n-\mathcal{B}(G))=\frac{x}{n-k-1}[(n-k-1)x^2-(2kn-k^2-3k)x+k^2n-kn].$$
Thus
$$\frac{\eta_1(G)}{\eta_{2}(G)}=\frac{2kn-k^2-3k+\sqrt{4kn^2-4k(3k+1)n+k^4+6k^3+9k^2}}{2kn-k^2-3k-\sqrt{4kn^2-4k(3k+1)n+k^4+6k^3+9k^2}}.$$
By Lemma \ref{le2,2}, we have
$$R_L(G)\geq \frac{\eta_1(G)}{\eta_{2}(G)}=\frac{2kn-k^2-3k+\sqrt{4kn^2-4k(3k+1)n+k^4+6k^3+9k^2}}{2kn-k^2-3k-\sqrt{4kn^2-4k(3k+1)n+k^4+6k^3+9k^2}}.$$
This completes the proof. $\Box$
\end{proof}

\begin{remark} In 2012, You and Liu \cite{YL} obtained the lower bound of connected $k$-regular graph as follows:
$$R_L(G)\geq \left(\sqrt{\frac{(n-1)(k+1)}{nk}}+\sqrt{\frac{(n-1)(k+1)}{nk}-1}\right)^2. \eqno{(3.3)}$$
If $G$ is a cycle $C_{10}$, applying (3.1) and (3.3), we have $R_L(G)\geq 4.1899$ and $R_L(G)\geq 3.0748$, respectively. In fact, $R_L(C_{10})\approx10.4721$.
This example shows that our result is better than known result for some special graphs.
\end{remark}

\begin{theorem}\label{th3,3} 
Let $G$ be a connected graph with $n$ vertices, $m$ edges, $\tau$ spanning trees and the first Zagreb index $Z_1$. Then
$$R_L(G)\leq 1+\frac{\sqrt{2\Omega}\left(2m+n\alpha+\sqrt{\frac{\Omega}{n-1}}\right)^{n-1}}{n^{n+3/2}\alpha \tau},$$
where $\mu_{n-1}\leq n\alpha \leq \mu_1$ and $\Omega=n^2(n-1)\alpha^2-4mn\alpha+nZ_1+2mn-4m^2$.
\end{theorem}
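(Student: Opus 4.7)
The plan is to reuse the same auxiliary matrix $M=L(G)+\alpha J$ constructed in the proof of Theorem \ref{th3,1}, and then invoke the upper-bound Kantorovich-type inequality of Lemma \ref{le2,6} instead of the lower-bound version (Lemma \ref{le2,1}). Since $J\mathbf{1}=n\mathbf{1}$ and $J$ annihilates the orthogonal complement of $\mathbf{1}$, while $L(G)\mathbf{1}=0$, the spectrum of $M$ is $\{\mu_1,\mu_2,\ldots,\mu_{n-1},n\alpha\}$. The constraint $\mu_{n-1}\leq n\alpha\leq\mu_1$ guarantees positive definiteness, so Lemma \ref{le2,6} applies and yields $R_L(G)=\lambda_{\max}(M)/\lambda_{\min}(M)$.

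First I would recall from the proof of Theorem \ref{th3,1} the two trace identities
\[
\operatorname{tr}(M)=2m+n\alpha,\qquad \operatorname{tr}(M^2)=Z_1+2m+n^2\alpha^2,
\]
and then compute $r$ and $s$ explicitly. A short algebraic simplification gives
\[
n^2 s^2 = n\operatorname{tr}(M^2)-(\operatorname{tr}(M))^2 = n^2(n-1)\alpha^2-4mn\alpha+nZ_1+2mn-4m^2 = \Omega,
\]
so $s=\sqrt{\Omega}/n$ and $r=(2m+n\alpha)/n$.

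The main new ingredient, and the only non-routine step, is evaluating $\det M$. Here I would use the Matrix--Tree theorem: the product of the nonzero Laplacian eigenvalues equals $n\tau$, i.e.\ $\mu_1\mu_2\cdots\mu_{n-1}=n\tau$. Combining this with the fact that the remaining eigenvalue of $M$ is $n\alpha$, one obtains
\[
\det M = n\alpha\cdot \mu_1\mu_2\cdots\mu_{n-1} = n^2\alpha\tau.
\]
This closed-form expression for $\det M$ in terms of $\tau$ is precisely what allows the Zagreb-index quantity $\Omega$ and the spanning-tree count $\tau$ to appear together in the final bound.

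Finally I would substitute $r$, $s$ and $\det M$ into the inequality of Lemma \ref{le2,6}, pull the factors of $n$ outside the $(n-1)$-th power, and simplify:
\[
\frac{s\sqrt{2n}}{\det M} = \frac{\sqrt{2\Omega}}{n^{n+3/2}\alpha\tau}\cdot n^{n-1},\qquad r+\frac{s}{\sqrt{n-1}}=\frac{1}{n}\left(2m+n\alpha+\sqrt{\tfrac{\Omega}{n-1}}\right),
\]
so that the $n^{n-1}$ factor exactly cancels the $n^{n-1}$ denominator coming out of $(r+s/\sqrt{n-1})^{n-1}$. This yields the stated inequality. I expect no genuine obstacle beyond bookkeeping: the only conceptual step is recognizing $\det M$ via Kirchhoff's theorem, while the rest is substitution and algebraic cleanup.
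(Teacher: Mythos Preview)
Your proposal is correct and follows exactly the paper's approach: form $M=L(G)+\alpha J$, reuse the trace computations from Theorem~\ref{th3,1}, evaluate $\det M=n^2\alpha\tau$ via the Matrix--Tree theorem, and substitute into Lemma~\ref{le2,6}. Your write-up is in fact more detailed than the paper's two-line proof, and the algebraic cancellation you sketch (yielding the $n^{n+3/2}$ denominator) checks out.
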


\begin{proof}
Let $M=L(G)+\alpha J$, where $J$ is the all-ones matrix. By the matrix-tree theorem, we have $\det M=n\alpha \prod_{i=1}^{n-1}\mu_i=n^2\alpha\tau$. By Lemma \ref{le2,6} and the proof of Theorem \ref{th3,1}, we have the proof. $\Box$
\end{proof}

\begin{theorem}\label{th3,4} 
Let $G$ be a connected graph with $n$ vertices, maximum degree $\Delta$ and minimum degree $\delta$. If $\overline{G}$ is a connected graph, then
$$R_L(G)+R_L(\overline{G})\geq \frac{\Delta+1}{\delta}+\frac{n-\delta}{n-\Delta-1}.$$
\end{theorem}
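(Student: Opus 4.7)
The plan is to apply the Goldberg-type lower bound $R_L(H) \geq (\Delta(H)+1)/\delta(H)$ (valid for any non-complete connected graph $H$, as recalled in the introduction) separately to $G$ and to $\overline{G}$, then add the two resulting inequalities. Since both $G$ and $\overline{G}$ are assumed connected, neither can equal $K_n$ (its complement would be edgeless and hence disconnected for $n\geq 2$), so the hypothesis needed to invoke that bound is met in each case. The bound itself is an immediate consequence of the two inequalities $\mu_1(H) \geq \Delta(H)+1$ from Lemma \ref{le2,3} and $\mu_{n-1}(H) \leq \delta(H)$ from Lemma \ref{le2,4}, which together give $R_L(H) = \mu_1(H)/\mu_{n-1}(H) \geq (\Delta(H)+1)/\delta(H)$.

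Applied to $G$ this yields $R_L(G) \geq (\Delta+1)/\delta$, which already matches the first term of the desired inequality. To get the second term I would use the elementary identities for complementary degree sequences, namely $\Delta(\overline{G}) = n-1-\delta$ and $\delta(\overline{G}) = n-1-\Delta$. Substituting these into $R_L(\overline{G}) \geq (\Delta(\overline{G})+1)/\delta(\overline{G})$ gives exactly $R_L(\overline{G}) \geq (n-\delta)/(n-\Delta-1)$. Summing the two inequalities produces the claim.

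There is no real obstacle here: the only subtlety is verifying that Lemmas \ref{le2,3} and \ref{le2,4} apply to both $G$ and $\overline{G}$, which follows from mutual connectedness (and in particular $n - \Delta - 1 = \delta(\overline{G}) \geq 1$ so the denominator is nonzero). Optionally, one could alternatively derive the bound on $R_L(\overline{G})$ by first writing $R_L(\overline{G}) = (n-\mu_{n-1}(G))/(n-\mu_1(G))$ via Lemma \ref{le2,7} and then using $\mu_1(G) \leq n - \delta(\overline{G}) - 1 \cdot 0 + \ldots$, but this is a detour; the clean route is the direct application of the Goldberg bound to $\overline{G}$.
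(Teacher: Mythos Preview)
Your argument is correct. It differs from the paper's route, though the underlying inequalities are the same. The paper invokes Lemma~\ref{le2,7} to rewrite
\[
R_L(G)+R_L(\overline{G})=\frac{\mu_1(G)}{\mu_{n-1}(G)}+\frac{n-\mu_{n-1}(G)}{n-\mu_1(G)}=:f\bigl(\mu_1(G),\mu_{n-1}(G)\bigr),
\]
checks that $f(x,y)$ is increasing in $x$ and decreasing in $y$, and then substitutes $\mu_1(G)\geq\Delta+1$ and $\mu_{n-1}(G)\leq\delta$ from Lemmas~\ref{le2,3} and~\ref{le2,4}. You instead decouple the sum and apply the Goldberg bound to $G$ and to $\overline{G}$ separately, using $\Delta(\overline{G})=n-1-\delta$ and $\delta(\overline{G})=n-1-\Delta$. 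Your route is a bit more direct, since it avoids the monotonicity verification; the paper's coupled formulation would only pay off if one had joint information on $(\mu_1,\mu_{n-1})$ stronger than the two separate bounds, which is not the case here. In substance the two proofs coincide: bounding the second summand via Goldberg on $\overline{G}$ is exactly the statement that $\frac{n-y}{n-x}\geq\frac{n-\delta}{n-\Delta-1}$ once $x\geq\Delta+1$ and $y\leq\delta$.
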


\begin{proof} By Lemma \ref{le2,7}, we have
$$R_L(G)+R_L(\overline{G})=\frac{\mu_1(G)}{\mu_{n-1}(G)}+\frac{n-\mu_{n-1}(G)}{n-\mu_1(G)}.$$
Let $f(x, y): = \frac{x}{y}+\frac{n-y}{n-x}$ with $0<x<n$ and $0<y<n$. It is easy to prove that the function $f(x, y)$ is
increasing in $x$ and decreasing in $y$. By Lemmas \ref{le2,3} and \ref{le2,4}, we have
$$R_L(G)+R_L(\overline{G})\geq f(\Delta+1, \delta)=\frac{\Delta+1}{\delta}+\frac{n-\delta}{n-\Delta-1}.$$
This completes the proof. $\Box$
\end{proof}

\section{\large  Laplacian spectral ratio of trees}

\begin{theorem}\label{th4,1} 
Let $T\neq K_{1,\, n-1}$ be a tree with $n$ vertices and diameter $\mathcal{D}$. If $\mathcal{D}\geq \pi \sqrt{\frac{n}{4}+\frac{3}{16n-24}+\frac{1}{8}}-1$, then
$$R_L(T)> R_L(K_{1,\, n-1}).$$
\end{theorem}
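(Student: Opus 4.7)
The plan is to show $R_L(T) > R_L(K_{1,n-1}) = n$ by combining a Zagreb-based lower bound on $\mu_1(T)$ with the diameter-based upper bound on $\mu_{n-1}(T)$ from Lemma \ref{le2,8}, and then checking that the algebraic condition that falls out of this combination is exactly the hypothesis. The target is immediate: the Laplacian spectrum of $K_{1,n-1}$ is $\{0,1^{(n-2)},n\}$, so $R_L(K_{1,n-1})=n$, and the theorem reduces to the strict inequality $\mu_1(T) > n\,\mu_{n-1}(T)$.

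For the denominator, Lemma \ref{le2,8} yields $\mu_{n-1}(T) \leq 4\sin^2\frac{\pi}{2(\mathcal{D}+1)}$, and the strict inequality $\sin x < x$ for $x>0$ sharpens this to $\mu_{n-1}(T) < \pi^2/(\mathcal{D}+1)^2$. For the numerator, since $T$ is bipartite with $n-1$ edges, Lemma \ref{le2,5} provides $\mu_1(T) \geq Z_1/(n-1)$. The required auxiliary lower bound is $Z_1 \geq 4n-6$, valid for every tree on $n$ vertices; I would derive it from the identity
$$Z_1 \;=\; \sum_{i=1}^n(d_i-2)^2 + 4\sum_{i=1}^n d_i - 4n \;=\; \sum_{i=1}^n(d_i-2)^2 + 4n - 8$$
(using $\sum_i d_i = 2(n-1)$) together with the observation that every tree has at least two leaves, so $\sum_i(d_i-2)^2 \geq 2$. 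This yields $\mu_1(T) \geq (4n-6)/(n-1)$.

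Combining the two bounds produces $R_L(T) > (4n-6)(\mathcal{D}+1)^2/((n-1)\pi^2)$, and requiring the right-hand side to be at least $n$ rearranges to $(\mathcal{D}+1)^2 \geq \pi^2 n(n-1)/(2(2n-3))$. A short algebraic check confirms the identity
$$\frac{n(n-1)}{2(2n-3)} \;=\; \frac{n}{4} + \frac{3}{16n-24} + \frac{1}{8},$$
which matches the hypothesis on $\mathcal{D}$ exactly, so the theorem follows. The only step that requires any care is the auxiliary tree inequality $Z_1 \geq 4n-6$ (equivalent to $P_n$ being the minimizer of the first Zagreb index among $n$-vertex trees); everything else is a routine chain of the cited lemmas, the elementary bound $\sin x < x$, and a one-line algebraic identity.
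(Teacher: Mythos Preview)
Your proof is correct and follows essentially the same route as the paper: bound $\mu_1(T)$ below via Lemma~\ref{le2,5} and the tree inequality $Z_1\ge 4n-6$, bound $\mu_{n-1}(T)$ above via Lemma~\ref{le2,8} together with $2(1-\cos x)=4\sin^2(x/2)<x^2$, and then rearrange $\dfrac{4n-6}{(n-1)\pi^2/(\mathcal D+1)^2}\ge n$ into the stated hypothesis. The only difference is cosmetic: the paper cites \cite{GD} for $Z_1\ge 4n-6$, whereas you supply the short self-contained argument via $\sum_i(d_i-2)^2\ge 2$.
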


\begin{proof} From \cite{GD}, we have $Z_1\geq 4n-6$ for any tree. By Lemma \ref{le2,5}, we have
$$\mu_1(T)\geq \frac{Z_1}{n-1}\geq \frac{4n-6}{n-1}.$$
If $\mathcal{D} \geq \pi \sqrt{\frac{n}{4}+\frac{3}{16n-24}+\frac{1}{8}}-1$, by Lemma \ref{le2,8}, we have
$$R_L(T)=\frac{\mu_1(T)}{\mu_{n-1}(T)}\geq \frac{4n-6}{2(n-1)\left(1-\cos\frac{\pi}{\mathcal{D}+1}\right)}>\frac{4n-6}{(n-1)\left(\frac{\pi}{\mathcal{D}+1}\right)^2}\geq n=R_L(K_{1,\, n-1}).$$
This completes the proof. $\Box$
\end{proof}

\begin{remark}
The condition in Theorem \ref{th4,1} is always better than that in Theorem \ref{th1,1}.
\end{remark}

\begin{theorem}\label{th4,2} 
Let $T\neq K_{1,\, n-1}$ be a tree with $n\geq 6$ vertices and with exactly $k$ vertices of maximum degree $\Delta$. If $\Delta \geq \sqrt{\frac{n(n-3)}{2k}+1}$, then
$$R_L(T)> R_L(K_{1,\, n-1}).$$
\end{theorem}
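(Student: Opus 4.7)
The plan is to lower-bound $\mu_1(T)$ through the bipartite Zagreb-index estimate of Lemma \ref{le2,5} and to upper-bound $\mu_{n-1}(T)$ strictly by $1/2$, the latter being where the hypothesis $n\ge 6$ enters; then the resulting bound on $R_L(T)$ will exceed $n=R_L(K_{1,\,n-1})$ precisely under the hypothesis on $\Delta$.

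First, since $T$ is bipartite with $m=n-1$ edges, Lemma \ref{le2,5} yields $\mu_1(T)\ge Z_1/(n-1)$. With exactly $k$ vertices of degree $\Delta$ and each of the remaining $n-k$ vertices having degree at least $1$, one has
$$
Z_1(T)\ \ge\ k\Delta^2+(n-k),\qquad \mu_1(T)\ \ge\ \frac{k\Delta^2+n-k}{n-1}.
$$
The key auxiliary step is to show $\mu_{n-1}(T)<1/2$ for every tree $T\ne K_{1,\,n-1}$ with $n\ge 6$. Since $T$ is not a star, $\mathcal{D}(T)\ge 3$, and I would split on the diameter. If $\mathcal{D}(T)\ge 4$, Lemma \ref{le2,8} already gives $\mu_{n-1}(T)\le 2(1-\cos(\pi/5))=(3-\sqrt5)/2<1/2$. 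If $\mathcal{D}(T)=3$, then $T$ is a double star $D(p,q)$ with $p+q+2=n\ge 6$, and a symmetric-subspace quotient-matrix reduction produces the quartic
$$
\bigl[\lambda^2-(p+2)\lambda+1\bigr]\bigl[\lambda^2-(q+2)\lambda+1\bigr]\ =\ (1-\lambda)^2,
$$
whose smallest positive root is verified to be $<1/2$ for all $(p,q)$ with $p+q=n-2\ge 4$; the threshold $n\ge 6$ is sharp here, since the broom $D(2,1)$ at $n=5$ has $\mu_{n-1}>1/2$.

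Combining the two estimates gives
$$
R_L(T)\ =\ \frac{\mu_1(T)}{\mu_{n-1}(T)}\ >\ \frac{2(k\Delta^2+n-k)}{n-1},
$$
and the condition $2(k\Delta^2+n-k)\ge n(n-1)$ rearranges precisely to the hypothesis $\Delta^2\ge n(n-3)/(2k)+1$, so $R_L(T)>n=R_L(K_{1,\,n-1})$. The principal technical obstacle is the auxiliary bound $\mu_{n-1}(T)<1/2$ in the double-star case: one must estimate the smallest positive root of the displayed quartic uniformly over all $(p,q)$ with $p+q=n-2$, and the extremal configuration $(p,q)=(n-3,1)$ barely satisfies the bound at the threshold $n=6$ (the root there being $\approx 0.485$), which is what forces the exact form of the hypothesis.
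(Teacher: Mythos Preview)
Your proof is correct and follows essentially the same line as the paper: bound $\mu_1(T)$ below via Lemma~\ref{le2,5} and $Z_1\ge k\Delta^2+(n-k)$, bound $\mu_{n-1}(T)$ strictly by $1/2$, and combine. The only difference is that the paper simply cites \cite{GMS} for the auxiliary fact $\mu_{n-1}(T)<1/2$ (for $T\ne K_{1,n-1}$, $n\ge 6$), whereas you sketch your own proof of it via Lemma~\ref{le2,8} for $\mathcal{D}\ge 4$ and a quotient-matrix computation for the double-star case $\mathcal{D}=3$.
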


\begin{proof} Since $Z_1\geq k\Delta^2+n-k$, by Lemma \ref{le2,5}, we have
$$\mu_1(T)\geq \frac{Z_1}{n-1}\geq \frac{k\Delta^2+n-k}{n-1}.$$
From \cite{GMS}, we have that $\mu_{n-1}(T)<\frac{1}{2}$ when $T\neq K_{1,\, n-1}$ is a tree on $n\geq 6$ vertices.
Thus
$$R_L(T)=\frac{\mu_1(T)}{\mu_{n-1}(T)}> \frac{2(k\Delta^2+n-k)}{n-1}\geq n=R_L(K_{1,\, n-1})$$
for $\Delta \geq \sqrt{\frac{n(n-3)}{2k}+1}$.
This completes the proof. $\Box$
\end{proof}

\begin{remark}
The condition in Theorem \ref{th4,2} is always better than that in Theorem \ref{th1,1}.
\end{remark}

Let $T$ be a tree, and let $d(u, v)$ be the distance between vertices $u$ and $v$ in $T$. Define
$$\mathfrak{D}(T)=\{v\in V(T): \text{there}\,\, \text{is}\,\, \text{some} \,\, x\in V(G)\,\, \text{with}\,\, d(x,v)\geq 3\}.$$
In \cite{BEHK}, Barrett et al. showed that
$$\mu_{n-1}(T)\geq \frac{n-s+1-\sqrt{(n-s+1)^2-4(n-2s)}}{2},$$
where $s=|\mathfrak{D}(T)|/2$ and $\mathfrak{D}(T)$ consists of all the vertices of eccentricity at least three.

\begin{theorem}\label{th4,3} 
Let $T$ be a tree with $n\geq 6$ vertices. If $|\mathfrak{D}(T)|\leq n-1$, then
$$R_L(T)< R_L(P_n).$$
\end{theorem}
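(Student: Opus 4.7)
The plan is to combine the universal upper bound $\mu_1(T)\le n$ (Lemma~\ref{le2,3}) with the Barrett et al.\ lower bound on $\mu_{n-1}(T)$ quoted immediately above the theorem, and then to compare the resulting estimate with the closed-form value of $R_L(P_n)$. To that end, I would first use the standard Laplacian spectrum $\mu_k(P_n)=4\sin^2(k\pi/(2n))$ of the path to write $R_L(P_n)=\cot^2(\pi/(2n))=(1+\cos(\pi/n))/(1-\cos(\pi/n))$, and then interpret the hypothesis $|\mathfrak{D}(T)|\le n-1$ as asserting the existence of a vertex of eccentricity at most~$2$, which forces the radius of $T$ to be at most~$2$ and the diameter at most~$4$. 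In particular, $T$ is either the star $K_{1,n-1}$ or a tree of depth $2$ rooted at a central vertex.

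Setting $s=|\mathfrak{D}(T)|/2\le(n-1)/2$, the Barrett bound reads
\[
\mu_{n-1}(T)\ge g(s):=\frac{(n-s+1)-\sqrt{(n-s+1)^2-4(n-2s)}}{2},
\]
which a short derivative check shows is decreasing in $s$ on the admissible interval. At the extreme $s=(n-1)/2$, rationalisation gives $g((n-1)/2)=4/\bigl((n+3)+\sqrt{(n-1)(n+7)}\bigr)$. Coupling this with $\mu_1(T)\le n$ reduces the theorem to the scalar inequality
\[
\frac{n}{g(s)}<\cot^2\!\left(\frac{\pi}{2n}\right),\qquad s\le(n-1)/2,\ n\ge 6.
\]

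The main obstacle is the boundary case $s=(n-1)/2$: the two sides have the same leading order in $n$ (namely $\tfrac12 n^2$ on the left versus $\tfrac{4}{\pi^2}n^2\approx 0.405\,n^2$ on the right), so Barrett together with $\mu_1\le n$ alone is just barely insufficient and additional structural input is needed. I would close it in three strata: (i) the star case $T=K_{1,n-1}$ gives $R_L(T)=n<\cot^2(\pi/(2n))$ for every $n\ge 4$ by a monotone check; (ii) when $|\mathfrak{D}(T)|\le n-2$, the value of $s$ drops to at most $(n-2)/2$, reducing $n/g(s)$ to leading order $\tfrac14 n^2$, and the comparison follows from a Taylor expansion of $\cot^2(\pi/(2n))$ together with numerical verification of the base cases $n=6,7,8$; (iii) in the residual boundary case $|\mathfrak{D}(T)|=n-1$, the requirement that every non-central vertex have eccentricity $\ge 3$ forces at least two branches at the centre to carry pendant leaves, and the Anderson--Morley inequality $\mu_1(T)\le\max\{d(u)+d(v):uv\in E(T)\}$ then improves $\mu_1(T)\le n$ to $\mu_1(T)\le n-1$; an accompanying refinement of the Barrett estimate, obtainable via the quotient-matrix device of Lemma~\ref{le2,2} applied to the partition into centre, the centre's neighbours, and pendant leaves, is expected to close the remaining margin.
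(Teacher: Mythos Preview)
Your reading of the obstruction is exactly right, and in fact the paper's own proof does not navigate it. The paper argues precisely along the lines of your first paragraph: it shows $\tan(\pi/2n)<\tfrac{11}{10}\cdot\tfrac{\pi}{2n}$ for $n\ge6$, whence $R_L(P_n)=\cot^2(\pi/2n)>400n^2/(121\pi^2)$, and then asserts
\[
R_L(T)\le\frac{2n}{\,n-s+1-\sqrt{(n-s+1)^2-4(n-2s)}\,}\le\frac{400n^2}{121\pi^2}
\]
for all $s\le(n-1)/2$. But at $s=(n-1)/2$ the middle expression rationalises to $\tfrac{n}{4}\bigl((n+3)+\sqrt{(n-1)(n+7)}\bigr)\sim n^2/2$, whereas $400/(121\pi^2)\approx0.335$; already at $n=6$ one has roughly $25.6$ versus $12.1$. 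So the second displayed inequality in the paper's proof is false at the boundary, and the printed argument is exactly the approach you correctly flagged as insufficient.

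That said, your repair is not yet a proof either. The Anderson--Morley improvement $\mu_1(T)\le n-1$ in case~(iii) removes only a lower-order term and leaves the leading coefficient $\tfrac12$ unchanged, so the comparison with $R_L(P_n)\sim 4n^2/\pi^2$ still fails for large $n$. The substantive work therefore sits entirely in the ``accompanying refinement of the Barrett estimate'', which you leave as an expectation rather than execute. Note also that Lemma~\ref{le2,2} applied to a $3\times 3$ quotient gives $\eta_2\ge\mu_{n-1}$, an \emph{upper} bound on $\mu_{n-1}$, not the lower bound you need; some further device---an equitable partition, or a direct spectral calculation exploiting the depth-$2$ structure as in the remark immediately following the theorem---would be required. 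In short, you have correctly diagnosed a genuine gap that the paper's proof shares, but your sketch does not yet close it.
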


\begin{proof} Let $g(x): =\tan x-\frac{11}{10}x$. And $g'(x)=\frac{1}{\cos^2x}-\frac{11}{10}<0$ for $0<x<\frac{\pi}{11}$. Then $g(x)$ is decreasing function in the interval $(0, \frac{\pi}{11})$. Thus we have
$g(\frac{\pi}{2n})=\tan \frac{\pi}{2n}-\frac{11}{10}\cdot \frac{\pi}{2n}< g(0)=0$ for $n\geq 6$. It is easy to see that $\mu_1(P_n)=2+2\cos\frac{\pi}{n}$ and $\mu_{n-1}(P_n)=2-2\cos\frac{\pi}{n}$. Hence
$$R_L(P_n)=\frac{1+\cos\frac{\pi}{n}}{1-\cos\frac{\pi}{n}}=\frac{1}{\tan^2\frac{\pi}{2n}}>\frac{400n^2}{121\pi^2}.$$
If $s\leq \frac{n}{2}-\frac{1}{2}$, that is $|\mathfrak{D}(T)|\leq n-1$, by Lemma \ref{le2,3}, we have
$$R_L(T)=\frac{\mu_1(T)}{\mu_{n-1}(T)}\leq \frac{2n}{n-s+1-\sqrt{(n-s+1)^2-4(n-2s)}}\leq \frac{400n^2}{121\pi^2}<R_L(P_n)$$
for $n\geq 3$. This completes the proof. $\Box$
\end{proof}

\begin{remark} Let $T^{*}$ be the tree graph with $n$ vertices obtained from a star $K_{1,\, \frac{n}{2}}$ by joining $\frac{n}{2}$ pendant edges of $K_{1,\, \frac{n}{2}}$ to the $\frac{n}{2}-1$
isolated vertices by $\frac{n}{2}-1$ edges. It is not difficult to see that $|\mathfrak{D}(T^{*})|= n-1$. By the direct computation, we have the Laplacian characteristic polynomial of $T^{*}$ is:
$$\Phi(T^{*}, x)=x(x-2)(x^2-3x+1)^{\frac{n}{2}-2}\left[x^2-(\frac{n}{2}+2)x+\frac{n}{2}\right].$$
Thus we have
$$n=R_L(K_{1,\,n-1})<R_L(T^{*})=\frac{n+4+\sqrt{n^2+16}}{6-2\sqrt{5}}<\frac{400n^2}{121\pi^2}<R_L(P_n)$$
for $n\geq 6$.
\end{remark}

\begin{corollary}\label{cor4,1} 
Let $T$ be a tree with $n\geq 6$ vertices and diameter $\mathcal{D}$. If $\mathcal{D}\leq 4$, then
$$R_L(T)< R_L(P_n).$$
\end{corollary}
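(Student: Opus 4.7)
The plan is to reduce Corollary \ref{cor4,1} directly to Theorem \ref{th4,3} by showing that the hypothesis $\mathcal{D}\leq 4$ forces $|\mathfrak{D}(T)|\leq n-1$. So the only thing I need to produce is at least one vertex of $T$ whose eccentricity is at most $2$, since, by the very definition of $\mathfrak{D}(T)$, a vertex $v$ fails to lie in $\mathfrak{D}(T)$ precisely when $d(x,v)\leq 2$ for all $x\in V(T)$.

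To produce such a vertex I would invoke the classical structure of the center of a tree: the center of $T$ consists of either one vertex or two adjacent vertices, each of eccentricity equal to the radius $r(T)$, and for trees one has the sharp relation $r(T)=\lceil \mathcal{D}/2\rceil$ (seen, for instance, by the standard ``iteratively delete leaves'' argument). Under $\mathcal{D}\leq 4$ this gives $r(T)\leq 2$, so any center vertex $c$ satisfies $d(x,c)\leq 2$ for every $x\in V(T)$, and thus $c\notin \mathfrak{D}(T)$. This immediately yields $|\mathfrak{D}(T)|\leq n-1$.

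Having verified the hypothesis of Theorem \ref{th4,3}, the conclusion $R_L(T)<R_L(P_n)$ follows at once. The statement is valid for all $n\geq 6$ because that is exactly the range in which Theorem \ref{th4,3} is stated, and the condition $\mathcal{D}\leq 4$ is vacuous otherwise (for $n\geq 6$ vertices a star with diameter $2$ or any small-diameter tree indeed satisfies $\mathcal{D}\leq 4$, and all lie in the applicable regime). No genuine obstacle arises here: the argument is a one-line structural observation about the center of a tree combined with an application of the preceding theorem, and the only subtlety is remembering that the radius of a tree is exactly $\lceil \mathcal{D}/2\rceil$ rather than merely bounded by $\mathcal{D}$.
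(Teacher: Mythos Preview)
Your proof is correct and follows exactly the same approach as the paper: both show that $\mathcal{D}\leq 4$ implies $|\mathfrak{D}(T)|\leq n-1$ and then invoke Theorem~\ref{th4,3}. You simply make explicit what the paper leaves implicit, namely that a center vertex of $T$ has eccentricity $r(T)=\lceil \mathcal{D}/2\rceil\leq 2$ and hence lies outside $\mathfrak{D}(T)$.
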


\begin{proof}
If $\mathcal{D}\leq 4$, then $|\mathfrak{D}(T)|\leq n-1$. By Theorem \ref{th4,3},  we have the proof.  $\Box$
\end{proof}

\begin{theorem}\label{th4,4} 
Let $T$ be a tree with $n\geq 6$ vertices, maximum degree $\Delta$ and diameter $\mathcal{D}$. If $\Delta+2\sqrt{\Delta+1}\leq \frac{1600n}{121\mathcal{D}\pi^2}$, then
$$R_L(T)< R_L(P_n).$$
\end{theorem}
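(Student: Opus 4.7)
The plan is to mirror the structure of the proof of Theorem~\ref{th4,3}: produce an explicit upper bound on $R_L(T)=\mu_1(T)/\mu_{n-1}(T)$ and then compare it with the lower bound $R_L(P_n)>\frac{400 n^2}{121 \pi^2}$ that was already established (via the inequality $\tan\frac{\pi}{2n}<\frac{11}{10}\cdot\frac{\pi}{2n}$) in the course of proving Theorem~\ref{th4,3}. So the target is to show $R_L(T)\leq \frac{400 n^2}{121\pi^2}$ whenever the hypothesis $\Delta+2\sqrt{\Delta+1}\leq \frac{1600 n}{121\mathcal{D}\pi^2}$ holds.

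First, I would bound the numerator using a known Laplacian spectral radius bound for trees of the form $\mu_1(T)\leq \Delta+2\sqrt{\Delta+1}$. (The sharper bound $\mu_1(T)\leq \Delta+2\sqrt{\Delta-1}$, which is standard in the tree-spectra literature, implies the stated weaker version; in the small range $\Delta\leq 4$ one can even fall back on the Anderson--Morley bound $\mu_1(G)\leq \max\{d(u)+d(v):uv\in E\}\leq 2\Delta$, which is itself dominated by $\Delta+2\sqrt{\Delta+1}$ in that regime.) Second, I would bound the denominator from below by Mohar's classical diameter estimate $\mu_{n-1}(T)\geq \frac{4}{n\mathcal{D}}$.

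Combining the two estimates yields
$$R_L(T)=\frac{\mu_1(T)}{\mu_{n-1}(T)}\leq \frac{n\mathcal{D}}{4}\bigl(\Delta+2\sqrt{\Delta+1}\bigr).$$
Now the hypothesis $\Delta+2\sqrt{\Delta+1}\leq \frac{1600 n}{121\mathcal{D}\pi^2}$ is precisely tuned so that
$$R_L(T)\leq \frac{n\mathcal{D}}{4}\cdot \frac{1600 n}{121\mathcal{D}\pi^2}=\frac{400 n^2}{121\pi^2}<R_L(P_n),$$
the last strict inequality being the chain established inside the proof of Theorem~\ref{th4,3}.

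The main obstacle is justifying the upper bound $\mu_1(T)\leq \Delta+2\sqrt{\Delta+1}$, since it is not among the lemmas collected in Section~2; the unusual algebraic form of the hypothesis strongly suggests that the authors have a specific reference in mind (most likely a Stevanović/Shu--Wu style bound for trees, which also realizes the correct leading order $\Delta+2\sqrt{\Delta-1}$ coming from the spectral radius of an infinite $\Delta$-regular tree). Once that citation is in place, the remainder of the argument is a direct substitution, exactly as above.
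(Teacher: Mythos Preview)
Your argument is correct and matches the paper's proof essentially line for line: the paper cites \cite{M} and \cite{RR} for the strict bound $\mu_1(T)<\Delta+2\sqrt{\Delta+1}$ and for $\mu_{n-1}(T)\geq \frac{4}{n\mathcal{D}}$, combines them into $R_L(T)<\frac{n\mathcal{D}(\Delta+2\sqrt{\Delta+1})}{4}\leq \frac{400n^2}{121\pi^2}$, and then appeals (implicitly) to the inequality $R_L(P_n)>\frac{400n^2}{121\pi^2}$ already obtained inside the proof of Theorem~\ref{th4,3}. The only minor discrepancy is the reference you were unsure of: the $\Delta+2\sqrt{\Delta+1}$ bound is taken from Rojo--Robbiano~\cite{RR} rather than a Stevanovi\'c/Shu--Wu source.
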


\begin{proof} Form \cite{M} and \cite{RR}, we have $\mu_1(T)<\Delta+2\sqrt{\Delta+1}$ and $\mu_{n-1}(T)\geq \frac{4}{n \mathcal{D}}$. Thus
$$R_L(T)<\frac{n\mathcal{D}(\Delta+2\sqrt{\Delta+1})}{4}\leq \frac{400n^2}{121\pi^2},$$
that is,
$$\Delta+2\sqrt{\Delta+1}\leq \frac{1600n}{121\mathcal{D}\pi^2}.$$
This completes the proof. $\Box$
\end{proof}

\section{\large  Laplacian spectral ratio of special trees}

Let $T_{\Delta,\, \mathcal{D}}$ be a caterpillar tree with $n$ vertices, maximum degree $\Delta$ and diameter $\mathcal{D}\geq3$, and $(\Delta-1)(\mathcal{D}-1)=n$, shown in Figure 1.

\begin{figure}[!hbpt]
\begin{center}
\includegraphics[scale=0.88]{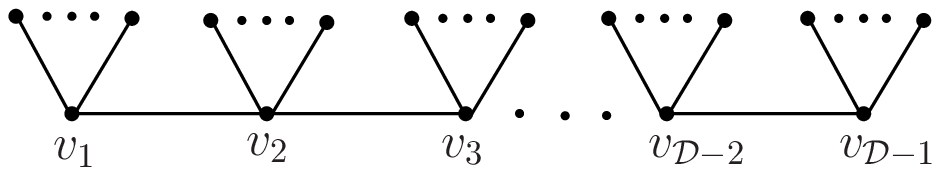}\\
Figure 1. ~ $T_{\Delta,\, \mathcal{D}}$, $d(v_1)=d(v_{\mathcal{D}-1})=\Delta-1$, $d(v_2)=\cdots=d(v_{\mathcal{D}-2})=\Delta$.
\end{center}\label{fig02}
\end{figure}

\begin{theorem}\label{th5,1} 
Let $T_{\Delta,\, \mathcal{D}}$ be a tree with $n\geq 5$ vertices, maximum degree $\Delta$ and diameter $\mathcal{D}\geq3$. Then
$$R_L(T_{\Delta,\,\mathcal{D}})=\frac{\Delta+1+2\cos\frac{\pi}{\mathcal{D}-1}+\sqrt{\left(\Delta-1+2\cos\frac{\pi}{\mathcal{D}-1}\right)^2+4(\Delta-2)}}
{\Delta+1-2\cos\frac{\pi}{\mathcal{D}-1}-\sqrt{\left(\Delta-1-2\cos\frac{\pi}{\mathcal{D}-1}\right)^2+4(\Delta-2)}}.$$
\end{theorem}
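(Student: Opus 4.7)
The plan is to exploit the pendant symmetry of $T_{\Delta,\mathcal{D}}$ by splitting the Laplacian spectrum along the orthogonal decomposition $\mathbb{R}^{n}=V_{\mathrm{sym}}\oplus V_{\mathrm{asym}}$, where $V_{\mathrm{sym}}$ consists of vectors that are constant on each pendant class $P(v_i):=\{u\in V(T_{\Delta,\mathcal{D}}):u\sim v_i,\,d(u)=1\}$ and $V_{\mathrm{asym}}$ consists of vectors that vanish on the spine and sum to zero on each $P(v_i)$. A direct check shows that $L$ preserves both subspaces, acts as the identity on $V_{\mathrm{asym}}$, and hence contributes the eigenvalue $1$ with multiplicity $\dim V_{\mathrm{asym}}=(\Delta-3)(\mathcal{D}-1)$. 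The remaining $2(\mathcal{D}-1)$ eigenvalues come from $L|_{V_{\mathrm{sym}}}$, which is similar to the equitable-partition quotient matrix $\mathcal{B}$ in the sense of Lemma~\ref{le2,2}.

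Next, I would solve the eigenvalue equation on $V_{\mathrm{sym}}$ by writing $x_i$ for the entry at $v_i$ and $y_i$ for the common entry on $P(v_i)$. The pendant row gives $y_i=x_i/(1-\lambda)$ for $\lambda\neq 1$, and substitution into the spine rows collapses everything to the three-term recurrence
$$a\,x_i=x_{i-1}+x_{i+1}\qquad(2\le i\le \mathcal{D}-2),\qquad a:=\frac{\lambda^{2}-(\Delta+1)\lambda+2}{1-\lambda},$$
together with the boundary relations $b\,x_1=x_2$ and $b\,x_{\mathcal{D}-1}=x_{\mathcal{D}-2}$, where a short computation gives $b=a-1$. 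Introducing phantom values $x_0:=x_1$ and $x_{\mathcal{D}}:=x_{\mathcal{D}-1}$ converts the boundary equations into the Neumann form of the recurrence, and the ansatz $x_i=\cos((i-\tfrac{1}{2})\theta)$ solves the whole system precisely when $\theta=\theta_k:=k\pi/(\mathcal{D}-1)$ for $k=0,1,\ldots,\mathcal{D}-2$. For each such $\theta_k$ set $a_k=2\cos\theta_k$ and read off the two roots of $\lambda^{2}+(a_k-\Delta-1)\lambda+(2-a_k)=0$; completing the square in the discriminant rewrites them as
$$\lambda_{\pm}(a_k)=\frac{1}{2}\left[(\Delta+1-a_k)\pm\sqrt{(\Delta-1-a_k)^{2}+4(\Delta-2)}\right].$$

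Finally, I would single out $\mu_{1}$ and $\mu_{n-1}$. Implicit differentiation of the defining quadratic yields $\lambda_{\pm}'(a)=(1-\lambda_{\pm})/(\lambda_{\pm}-\lambda_{\mp})$; combined with $\lambda_{+}>1$ (from $\mu_{1}\ge\Delta+1$, Lemma~\ref{le2,3}) and $\lambda_{-}<1$ (immediate from the discriminant once $\Delta\ge 3$), both $\lambda_{\pm}$ are strictly decreasing in $a$ on the relevant ranges. Hence the maximum of $\lambda_{+}(a_k)$ is attained at the smallest $a_k$, namely $a_{\mathcal{D}-2}=-2\cos\frac{\pi}{\mathcal{D}-1}$, producing the displayed $\mu_{1}$, while the minimum of $\lambda_{-}(a_k)$ over $k\ge 1$ (since $\lambda_{-}(a_0)=0$ is the trivial eigenvalue $\mu_{n}$) is attained at the largest such $a_k$, namely $a_{1}=2\cos\frac{\pi}{\mathcal{D}-1}$, producing the displayed $\mu_{n-1}$. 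The main obstacle is precisely this last bit of book-keeping: one has to rule out the pendant eigenvalue $1$ and the outlier $\lambda_{+}(a_0)=\Delta-1$ as competing candidates, which the inequalities $\lambda_{+}>1>\lambda_{-}$ together with the monotonicity in $a$ handle. Dividing the two closed forms then yields the stated expression for $R_{L}(T_{\Delta,\mathcal{D}})$.
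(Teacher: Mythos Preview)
Your argument is correct and follows essentially the same route as the paper: both proofs exploit the pendant structure to peel off the eigenvalue $1$ and reduce to a one-parameter family of quadratics indexed by the cosines $2\cos\frac{k\pi}{\mathcal{D}-1}$ coming from the spine path. The paper packages the reduction as a Schur-complement factorisation of $\det(xI-L)$ against the known spectrum of the tridiagonal block $B$, whereas you phrase it as the equitable-partition quotient and solve the resulting three-term recurrence by the cosine ansatz; your monotonicity argument for picking out $\mu_1$ and $\mu_{n-1}$ is in fact more explicit than the paper's appeal to Lemmas~\ref{le2,3} and~\ref{le2,4}.
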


\begin{proof} Let $I$ be an identity matrix of appropriate size. By the definition of $T_{\Delta,\, \mathcal{D}}$, we have
\begin{equation*}
L(T_{\Delta,\, \mathcal{D}})=\left(
  \begin{array}{ccccccccccc}
  B & \beta_1 & \beta_2 & \cdots & \beta_{\mathcal{D}-1}\\
  \beta_1^T & I & O & \cdots & O \\
  \beta_2^{T} & O & I & \cdots & O\\
  \vdots & \vdots & \vdots & \ddots& \vdots \\
  \beta_{\mathcal{D}-1}^T & O & O & \cdots & I\\
\end{array}
\right),
\end{equation*}
where
\begin{equation*}
B=\left(
  \begin{array}{ccccccccccc}
  \Delta-1 & -1 & 0 & 0 & \cdots & 0 & 0 & 0\\
  -1 &  \Delta & -1 & 0 & \cdots & 0 & 0 & 0\\
  0 &  -1 & \Delta & -1 & \cdots & 0 & 0 & 0\\
  0 &  0 & -1 & \Delta & \cdots & 0 & 0 & 0\\
  0 &  0 & 0 & -1 & \ddots & 0 & 0 & 0\\
  \vdots &  \vdots & \vdots & \vdots & \ddots & \ddots & \vdots & \vdots\\
  0 &  0 & 0 & 0 & \cdots & -1 & \Delta & -1\\
  0 &  0 & 0 & 0 & \cdots & 0 & -1 & \Delta-1\\
\end{array}
\right)_{(\mathcal{D}-1)\times (\mathcal{D}-1)},
\end{equation*}
\begin{equation*}
\beta_1=\left(
  \begin{array}{ccccccccccc}
 -1 & -1 & \cdots & -1 \\
 0 & 0 & \cdots & 0 \\
 \vdots & \vdots & \cdots & \vdots \\
  0 & 0 & \cdots & 0 \\
\end{array}
\right)_{(\mathcal{D}-1)\times (\Delta-2)},
\beta_2=\left(
  \begin{array}{ccccccccccc}
 0 & 0 & \cdots & 0 \\
 -1 & -1 & \cdots & -1 \\
 \vdots & \vdots & \cdots & \vdots \\
  0 & 0 & \cdots & 0 \\
\end{array}
\right)_{(\mathcal{D}-1)\times (\Delta-2)},\ldots.
\end{equation*}
Thus we have the Laplacian characteristic polynomial of $T_{\Delta, \mathcal{D}}$ is:
\begin{eqnarray*}
\Phi(T_{\Delta, \mathcal{D}},x) & = & \det(xI-L(T_{\Delta, \mathcal{D}}))\\
& = & \det^{\mathcal{D}-1}(xI-I)\cdot \det\left(\left(x+(\Delta-2)\frac{1}{1-x}\right)I-B\right)\\
& = & (x-1)^{(\mathcal{D}-1)(\Delta-2)}\det\left(\left(x+(\Delta-2)\frac{1}{1-x}\right)I-B\right).
\end{eqnarray*}
Note that the eigenvalues of $B$ are:
$$\lambda_i(B)=\Delta-2\cos\frac{(i-1)\pi}{\mathcal{D}-1}, \quad i=1, 2, \ldots, \mathcal{D}-1.$$
By Lemmas \ref{le2,3} and \ref{le2,4}, $\mu_1(T_{\Delta, \mathcal{D}})$ and $\mu_{n-1}(T_{\Delta, \mathcal{D}})$ are the roots of the equation
$$x+(\Delta-2)\frac{1}{1-x}=\Delta-2\cos\frac{(i-1)\pi}{\mathcal{D}-1}, \quad i=1, 2, \ldots, \mathcal{D}-1.$$
Therefore, we have
\begin{eqnarray*}
\mu_1(T_{\Delta, \mathcal{D}}) & = & \frac{\Delta+1+2\cos\frac{\pi}{\mathcal{D}-1}+\sqrt{\left(\Delta-1+2\cos\frac{\pi}{\mathcal{D}-1}\right)^2+4(\Delta-2)}}{2},\\
\mu_{n-1}(T_{\Delta, \mathcal{D}}) & = & \frac{\Delta+1-2\cos\frac{\pi}{\mathcal{D}-1}-\sqrt{\left(\Delta-1-2\cos\frac{\pi}{\mathcal{D}-1}\right)^2+4(\Delta-2)}}{2}.
\end{eqnarray*}
Further,
$$R_L(T_{\Delta,\, \mathcal{D}})
=\frac{\Delta+1+2\cos\frac{\pi}{\mathcal{D}-1}+\sqrt{\left(\Delta-1+2\cos\frac{\pi}{\mathcal{D}-1}\right)^2+4(\Delta-2)}}{\Delta+1-2\cos\frac{\pi}{\mathcal{D}-1}
-\sqrt{\left(\Delta-1-2\cos\frac{\pi}{\mathcal{D}-1}\right)^2+4(\Delta-2)}}.$$
This completes the proof. $\Box$
\end{proof}

\begin{corollary}\label{cor5,1} 
Let $T_{\Delta,\, \mathcal{D}}$ be a tree with $n\geq 5$ vertices, maximum degree $\Delta$ and diameter $\mathcal{D}\geq3$. Then
$$R_L(K_{1,\,n-1})<R_L(T_{\Delta,\, \mathcal{D}})<R_L(P_n).$$
\end{corollary}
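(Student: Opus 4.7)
Both inequalities will be derived by direct manipulation of the closed-form expression from Theorem~5.1. Set $c := 2\cos(\pi/(\mathcal{D}-1))$, so $c \in [0, 2)$ for $\mathcal{D} \geq 3$, and note that the hypotheses $n = (\Delta-1)(\mathcal{D}-1) \geq 5$ with $\mathcal{D} \geq 3$ force $\Delta \geq 3$. Rationalizing the denominator of the formula in Theorem~5.1 one obtains
\[
R_L(T_{\Delta,\mathcal{D}}) = \frac{\bigl[(\Delta+1+c)+\sqrt{(\Delta-1+c)^2+4(\Delta-2)}\bigr]\,\bigl[(\Delta+1-c)+\sqrt{(\Delta-1-c)^2+4(\Delta-2)}\bigr]}{4(2-c)},
\]
which will be the starting point for both bounds.

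For the left inequality $R_L(K_{1,n-1}) = n < R_L(T_{\Delta,\mathcal{D}})$, I would use $\sqrt{(\Delta-1\pm c)^2+4(\Delta-2)} \geq \Delta-1 \pm c$ (valid because $\Delta \geq 3 > 1+c$) in the displayed formula to derive $R_L(T_{\Delta,\mathcal{D}}) \geq (\Delta^2-c^2)/(2-c)$. It then suffices to prove $\Delta^2-c^2 > (2-c)(\Delta-1)(\mathcal{D}-1)$. Writing $2-c = 4\sin^2(\pi/(2(\mathcal{D}-1)))$ and applying $\sin x < x$ gives $(\mathcal{D}-1)(2-c) < \pi^2/(\mathcal{D}-1) \leq \pi^2/2$; combining with $c^2 \leq 4$, this reduces the goal to the polynomial inequality $(\Delta^2-4)(\mathcal{D}-1) > \pi^2(\Delta-1)$, which is immediate for $\Delta \geq 5$ and $\mathcal{D} \geq 3$. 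For $\Delta \in \{3, 4\}$ only finitely many small values of $\mathcal{D}$ escape this asymptotic bound, and those are verified by substituting the exact value of $c$ into the target inequality.

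For the right inequality $R_L(T_{\Delta,\mathcal{D}}) < R_L(P_n)$, I would apply the complementary estimate $\sqrt{(\Delta-1\pm c)^2+4(\Delta-2)} \leq (\Delta-1\pm c) + 2(\Delta-2)/(\Delta-1\pm c)$ in the same formula to produce a rational-function upper bound $U(\Delta, c)$ for $R_L(T_{\Delta,\mathcal{D}})$. Combining this with the lower bound $R_L(P_n) > 400 n^2/(121\pi^2)$ from the proof of Theorem~4.3 (which rests on $\tan x < 11x/10$ on $(0, \pi/11)$), the claim reduces to the algebraic inequality $121\pi^2\,U(\Delta, c) < 400(\Delta-1)^2(\mathcal{D}-1)^2$; after substituting $2-c = 4\sin^2(\pi/(2(\mathcal{D}-1)))$ and using $x - x^3/6 \leq \sin x \leq x$, this becomes a uniform polynomial inequality in $\Delta$ and $\mathcal{D}$ valid for $\Delta \geq 3$, $\mathcal{D} \geq 3$ away from a small boundary region. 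The main obstacle is exactly this upper-bound direction: the closed form is algebraically heavy, and producing a rational estimate $U(\Delta, c)$ that is simultaneously clean enough to be juxtaposed with $400 n^2/(121\pi^2)$ and tight enough to work uniformly requires careful bookkeeping together with a short finite case check for $(\Delta,\mathcal{D})$ pairs giving $n$ close to $5$.
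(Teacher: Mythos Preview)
Your approach is genuinely different from the paper's. The paper does not try to control $R_L(T_{\Delta,\mathcal{D}})$ uniformly in $(\Delta,\mathcal{D})$; instead it observes that the closed form $h(x,y)=\dfrac{x+1+2y+\sqrt{(x-1+2y)^2+4(x-2)}}{x+1-2y-\sqrt{(x-1-2y)^2+4(x-2)}}$ is monotone in each variable, and that along the constraint $(\Delta-1)(\mathcal{D}-1)=n$ the extremes of $R_L(T_{\Delta,\mathcal{D}})$ occur at the two endpoints $\mathcal{D}=3$ (i.e.\ $\Delta=\tfrac{n}{2}+1$) and $\Delta=3$ (i.e.\ $\mathcal{D}=\tfrac{n}{2}+1$). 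It then checks the two endpoint cases directly: $R_L(T_{\frac{n+2}{2},3})=\dfrac{n+4+\sqrt{n^2+8n-16}}{n+4-\sqrt{n^2+8n-16}}>n$ is a one-line computation, and $R_L(T_{3,\frac{n+2}{2}})<R_L(P_n)$ is reduced to an explicit polynomial inequality in $\cos\frac{\pi}{n}$.

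Your rationalization $R_L=\dfrac{(A+B)(C+D)}{4(2-c)}$ is correct, and your lower-bound argument is sound: the estimate $R_L\ge (\Delta^2-c^2)/(2-c)$ is valid (with strict inequality for $\Delta\ge 3$), and the reduction to $(\Delta^2-4)(\mathcal{D}-1)>\pi^2(\Delta-1)$ together with the few exact checks at small $(\Delta,\mathcal{D})$ does the job. For the upper bound, however, your plan is only a sketch: you bound $\sqrt{(\Delta-1\pm c)^2+4(\Delta-2)}$ from above, produce a rational $U(\Delta,c)$, and propose to compare it to $400n^2/(121\pi^2)$. This is workable asymptotically, but you yourself flag the bookkeeping and the residual finite case analysis as the main obstacle, and none of it is carried out. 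The paper's endpoint reduction sidesteps exactly this difficulty: once one is at $\Delta=3$, both $R_L(T_{3,\frac{n+2}{2}})$ and $R_L(P_n)$ are explicit functions of $\cos\frac{\pi}{n}$, so the comparison collapses to a single-variable polynomial inequality that factors nicely, with no estimation or case check needed. If you want to complete your argument cleanly, adopting the endpoint reduction for the upper bound is substantially less painful than pushing the uniform estimate through.
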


\begin{proof} Let $h(x, y): =\frac{x+1+2y+\sqrt{\left(x-1+2y\right)^2+4(x-2)}}{x+1-2y-\sqrt{\left(x-1-2y\right)^2+4(x-2)}}$. By derivation, we know that $h(x, y)$ is increasing for $3\leq x\leq \frac{n}{2}+1$ and $0\leq y<1$.
Note that $(\Delta-1)(\mathcal{D}-1)=n$. Comparing by calculation, the maximum and minimum values of $h(x,y)$ are respectively obtained at the following places:
$$\left(3, \cos\frac{2\pi}{n}\right), \quad \left(\frac{n+2}{2}, 0\right).$$
Thus we have
$$R_L(T_{\frac{n+2}{2},\, 3})\leq R_L(T_{\Delta,\, \mathcal{D}})\leq R_L(T_{3,\, \frac{n+2}{2}}).$$

Since $R_L(T_{\frac{n+2}{2},\, 3}) =  \frac{n+4+\sqrt{n^2+8n-16}}{n+4-\sqrt{n^2+8n-16}} > n$
for $n\geq 3$, we have $R_L(T_{\frac{n+2}{2},\, 3}) > R_L(K_{1,\,n-1})$.

On the other hand, it suffices to show that
\begin{eqnarray*}
R_L(T_{3,\, \frac{n+2}{2}}) & = & \frac{4+2\cos\frac{2\pi}{n}+\sqrt{\left(2+2\cos\frac{2\pi}{n}\right)^2+4}}{4-2\cos\frac{2\pi}{n}-\sqrt{\left(2-2\cos\frac{2\pi}{n}\right)^2+4}}\\
& = & \frac{1+2\cos^2\frac{\pi}{n}+\sqrt{4\cos^4\frac{\pi}{n}+1}}{3-2\cos^2\frac{\pi}{n}-\sqrt{4\left(1-\cos^2\frac{\pi}{n}\right)^2+1}}\\
& < & \frac{1+\cos\frac{\pi}{n}}{1-\cos\frac{\pi}{n}}\\
& = & R_L(P_n),
\end{eqnarray*}
which is equivalent to,
$$(1-\cos\frac{\pi}{n})\sqrt{4\cos^4\frac{\pi}{n}+1}+(1+\cos\frac{\pi}{n})\sqrt{4\left(1-\cos^2\frac{\pi}{n}\right)^2+1}<2-4\cos^2\frac{\pi}{n}+4\cos\frac{\pi}{n},$$
that is,
$$16\cos\frac{2\pi}{n}\left(1-\cos\frac{\pi}{n}\right)^2\left(8\cos^5\frac{\pi}{n}+10\cos^4\frac{\pi}{n} -4\cos^3\frac{\pi}{n} +\cos^2\frac{\pi}{n} +4\cos\frac{\pi}{n}+1\right)>0,$$
which holds evidently for $n\geq 5$. Thus we have
$R_L(T_{3,\, \frac{n+2}{2}})<R_L(P_n)$.

This completes the proof. $\Box$
\end{proof}

A broom tree $B_{n}^{t}$, shown in Figure 2,  is a tree obtained from the path $P_{n-t-1}$ and the star $K_{1,\,t}$ by joining one pendant vertex of $P_{n-t-1}$ and the center of $K_{1,\,t}$ by an edge.
Now, we give the numerical results of Laplacian spectral ratio of trees with nine vertices, shown in Figure 3. We find that $R_L(B_{9}^{3})>R_L(B_{9}^{2})>R_L(B_{9}^{4})>R_L(P_{9})$.
This shows that the upper bound of Conjecture \ref{con1,1} is not correct. Therefore, we obtain the Laplacian characteristic polynomial of $B_{n}^{t}$, and propose a new conjecture.

\begin{figure}[!hbpt]
\begin{center}
\includegraphics[scale=0.88]{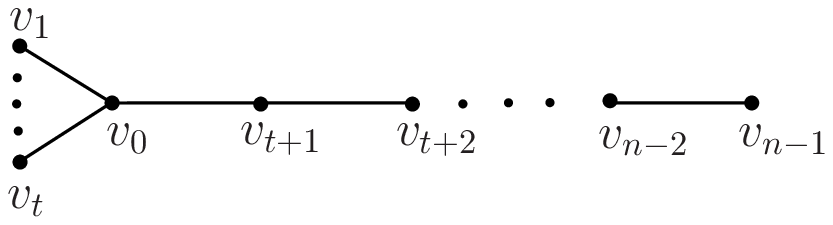}\\
Figure 2. ~ The broom tree $B_{n}^{t}$.
\end{center}\label{fig02}
\end{figure}

\begin{theorem}\label{th5,2} 
Let $B_{n}^{t}$ be a broom tree with $n$ vertices. Then the Laplacian characteristic polynomial of $B_{n}^{t}$ is
\begin{eqnarray*}
\Phi(B_{n}^{t}, \lambda) & = & (\lambda-1)^{t-1}(\lambda^2-(t+2)\lambda+1)\sum_{i=0}^{n-t-1}(-1)^{i}\binom{2n-2t-2-i}{i}\lambda^{n-t-1-i}\\
& & -(\lambda-1)^t\sum_{i=0}^{n-t-2}(-1)^{i}\binom{2n-2t-4-i}{i}\lambda^{n-t-2-i}.
\end{eqnarray*}
\end{theorem}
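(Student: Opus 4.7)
The plan is to order the vertices of $B_n^t$ so that the $t$ star-leaves $u_1,\dots,u_t$ come first, then the star center $w$, then the path vertices $v_1,\dots,v_{n-t-1}$ (with $v_1$ adjacent to $w$). Under this ordering $\lambda I - L(B_n^t)$ splits naturally into three blocks: a diagonal $(\lambda-1)I_t$ coming from the leaves; a row and column for $w$ with diagonal entry $\lambda-(t+1)$ and a $1$ in each leaf coordinate and in the $v_1$ coordinate; and a tridiagonal $(n-t-1)\times(n-t-1)$ block $N$ whose diagonal entries are $\lambda-2$ and off-diagonals are $1$, except for the last diagonal entry which is $\lambda-1$ (reflecting the pendant endpoint of the path). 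Writing $k:=n-t-1$ and $S_k(\lambda):=\sum_{i=0}^{k}(-1)^i\binom{2k-i}{i}\lambda^{k-i}$, the target identity becomes
\[
\det(\lambda I - L(B_n^t)) = (\lambda-1)^{t-1}\bigl(\lambda^2-(t+2)\lambda+1\bigr)S_k(\lambda) - (\lambda-1)^{t}S_{k-1}(\lambda).
\]

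I would begin by eliminating the leaf block via the Schur complement. Since each leaf interacts only with $w$, inversion of $(\lambda-1)I_t$ contributes the factor $(\lambda-1)^t$ and modifies only the $(w,w)$ entry of the remaining $(n-t)\times(n-t)$ block, subtracting $t/(\lambda-1)$ so that
\[
\lambda-(t+1)-\frac{t}{\lambda-1}=\frac{\lambda^2-(t+2)\lambda+1}{\lambda-1}.
\]
The reduced determinant is then a $1$-bordered matrix $\det\begin{pmatrix} a & e_1^{T}\\ e_1 & N\end{pmatrix}$ with $a$ equal to the above rational function. Cofactor expansion along its first row produces $a\det N-\det N^{(1,1)}$, where $N^{(1,1)}$ denotes the principal submatrix of $N$ with first row and column removed — and this has exactly the same tridiagonal shape as $N$, just one size smaller. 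Combined with the $(\lambda-1)^t$ prefactor, this yields the two-term structure in the claimed formula, with $\det N$ as the first coefficient and $\det N^{(1,1)}$ as the second.

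It then remains to identify $\det N$ with the sum $S_k$. Expanding $\det N$ along the first row yields the three-term recurrence $D_k=(\lambda-2)D_{k-1}-D_{k-2}$ with $D_0=1$ and $D_1=\lambda-1$ (the modified corner of $N$ gives precisely this shifted initial data). I would then verify that $S_k$ satisfies the same recurrence with the same initial values, which reduces to the single binomial identity
\[
\binom{2k-i}{i}=\binom{2k-2-i}{i}+2\binom{2k-1-i}{i-1}-\binom{2k-2-i}{i-2},
\]
itself an easy consequence of two consecutive applications of Pascal's rule. The main obstacle is precisely this closed-form identification of the tridiagonal determinant with the binomial sum; once it is in place, the apparent pole $1/(\lambda-1)$ introduced by the Schur step is absorbed cleanly into $(\lambda-1)^{t-1}$, leaving a genuine polynomial and producing exactly the two-term expression in the statement.
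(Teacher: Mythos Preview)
Your argument is correct and lands on the same two–term expression
\[
(\lambda-1)^{t-1}\bigl(\lambda^2-(t+2)\lambda+1\bigr)D_{n-t-1}-(\lambda-1)^{t}D_{n-t-2}
\]
that the paper obtains, but the mechanism of reduction is different. The paper exploits the permutation symmetry of the $t$ leaves: for an eigenvector one may take $x_1=\cdots=x_t$, which collapses the leaf rows into a single row and produces a non-symmetric $(n-t+1)\times(n-t+1)$ matrix $\lambda I-L'$; the missing factor $(\lambda-1)^{t-1}$ is then appended at the end, coming from the $(t-1)$-dimensional eigenspace on which the leaves are not all equal. Your Schur-complement elimination of the $(\lambda-1)I_t$ block does the same job purely at the level of determinants, which is cleaner and makes the origin of the $(\lambda-1)^{t-1}$ factor transparent rather than implicit. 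A second difference is that the paper simply asserts the closed form $D_k=\sum_{i}(-1)^i\binom{2k-i}{i}\lambda^{k-i}$, whereas you actually verify it via the recurrence $D_k=(\lambda-2)D_{k-1}-D_{k-2}$ together with the Pascal-type identity you wrote down; this fills a gap the paper leaves to the reader.
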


\begin{proof} Let $X=(x_0, x_1, \ldots, x_{n-1})^T $ be an eigenvector corresponding to any eigenvalue $\lambda$.
By the eigenvalue equation $L(B_{n}^{t})X =\lambda X$, we have $x_1=x_2=\cdots=x_t$,
$$
\begin{cases}
(\lambda-t-1)x_0+tx_1+x_{t+1}=0,\\
x_0+(\lambda-1)x_1=0,\\
x_0+(\lambda-2)x_{t+1}+x_{t+2}=0,\\
x_{t+1}+(\lambda-2)x_{t+2}+x_{t+3}=0,\\
\vdots\\
x_{n-3}+(\lambda-2)x_{n-2}+x_{n-1}=0,\\
x_{n-2}+(\lambda-1)x_{n-1}=0.
\end{cases}$$
Since $X$ is an eigenvector, it follows that
$$\det(\lambda I-L')=
\begin{vmatrix}
\lambda-t-1& t & 1 & 0  & 0 & \cdots & 0 & 0 & 0\\
1 &\lambda-1 & 0 & 0 & 0 & \cdots & 0 & 0 & 0\\
1 & 0 & \lambda-2 & 1 & 0 & \cdots & 0 & 0 & 0  \\
0 & 0 & 1 & \lambda-2 & 1 & \cdots & 0 & 0 & 0  \\
0 & 0 & 0 & 1 & \lambda-2 &  \cdots & 0 & 0 & 0  \\
\vdots & \vdots & \vdots & \vdots &\vdots&  \ddots & \vdots & \vdots & \vdots  \\
0 & 0 & 0 & 0 & 0 &  \cdots & \lambda-2 & 1 & 0  \\
0 & 0 & 0 & 0 & 0 &  \cdots & 1& \lambda-2 & 1   \\
0 & 0 & 0 & 0 & 0 &  \cdots & 0 & 1 & \lambda-1  \\
\end{vmatrix}  =0.
$$
Expanded by the second row, we have
\begin{eqnarray*}
\det(\lambda I-L') & = & (\lambda-1)[(\lambda-t-1)D_{n-t-1}-D_{n-t-2}]-tD_{n-t-1}\\
& = & [\lambda^2-(t+2)\lambda+1]D_{n-t-1}-(\lambda-1)D_{n-t-2},
\end{eqnarray*}
where
\begin{eqnarray*}
D_{n-t-1} & = &
\begin{vmatrix}
\lambda-2 & 1 & 0 & \cdots & 0 & 0 & 0  \\
1 & \lambda-2 & 1 & \cdots & 0 & 0 & 0  \\
0 & 1 & \lambda-2 &  \cdots & 0 & 0 & 0  \\
\vdots & \vdots &\vdots&  \ddots & \vdots & \vdots & \vdots  \\
0 & 0 & 0 &  \cdots & \lambda-2 & 1 & 0  \\
0 & 0 & 0 &  \cdots & 1& \lambda-2 & 1   \\
0 & 0 & 0 &  \cdots & 0 & 1 & \lambda-1  \\
\end{vmatrix}\\
& = & \sum_{i=0}^{n-t-1}(-1)^{i}\binom{2n-2t-2-i}{i}\lambda^{n-t-1-i}.
\end{eqnarray*}
Thus the Laplacian characteristic polynomial of $B_{n}^{t}$ is
\begin{eqnarray*}
\Phi(B_{n}^{t}, \lambda) & = & (x-1)^{t-1}\det(\lambda I-L')\\
& = & (\lambda-1)^{t-1}(\lambda^2-(t+2)\lambda+1)\sum_{i=0}^{n-t-1}(-1)^{i}\binom{2n-2t-2-i}{i}\lambda^{n-t-1-i}\\
& & -(\lambda-1)^t\sum_{i=0}^{n-t-2}(-1)^{i}\binom{2n-2t-4-i}{i}\lambda^{n-t-2-i}.
\end{eqnarray*}

This completes the proof. $\Box$

\end{proof}

\begin{figure}[!hbpt]
\begin{center}
\includegraphics[scale=1.1]{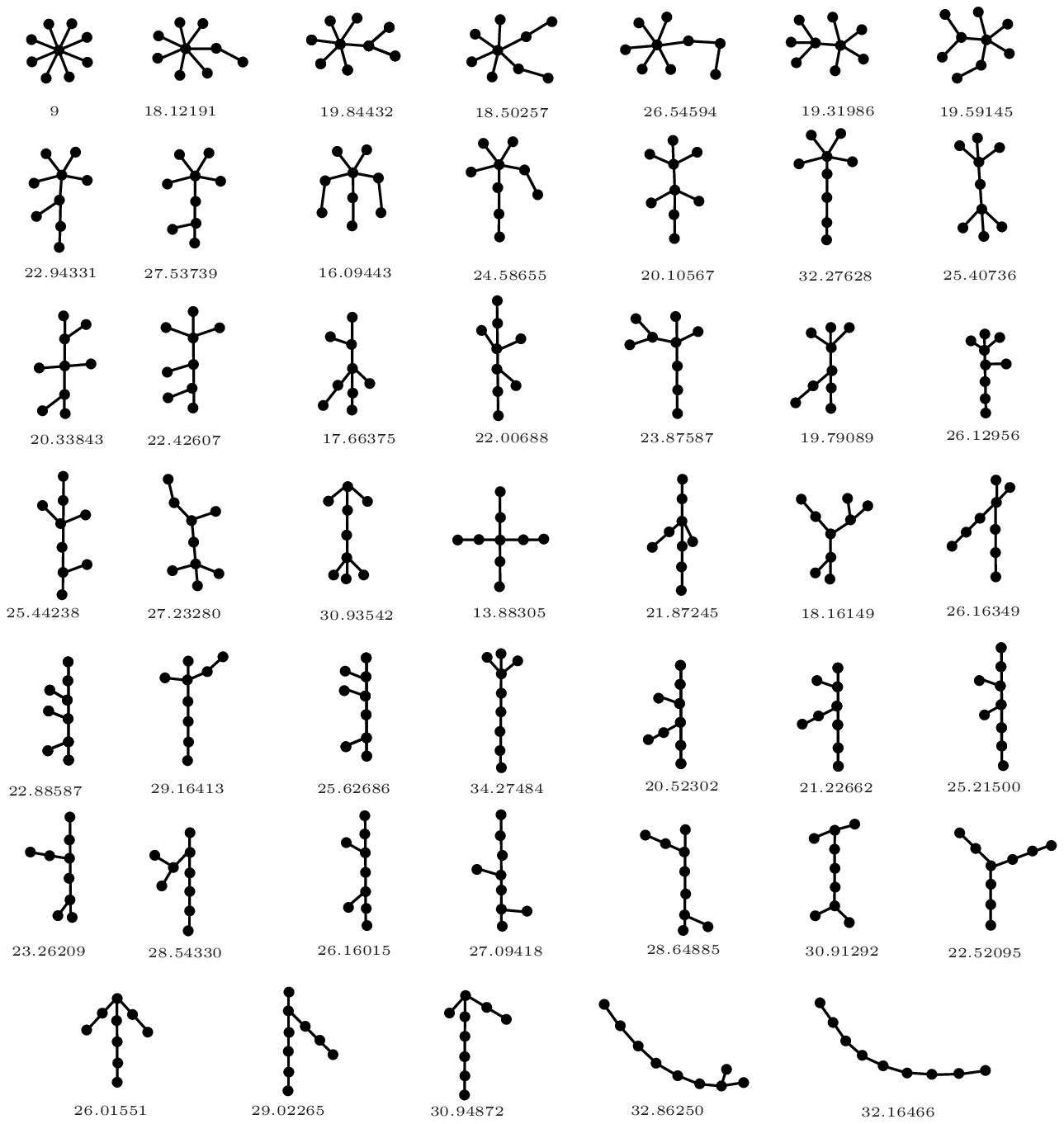}\\
Figure 3.~ The Laplacian spectral ratio of trees with nine vertices.
\end{center}\label{fig02}
\end{figure}

\begin{conjecture}\label{con5,1} 
Let $T$ be a tree with  $n\geq 8$ vertices. Then
$$R_L(K_{1,\,n-1})\leq R_L(T)\leq \begin{dcases}
R_L(B_{n}^{\frac{n-3}{2}}), & \text{if}\,\, n \,\,\text{is} \,\,\text{odd};\\
R_L(B_{n}^{\frac{n-4}{2}}), & \text{if}\,\, n \,\, \text{is} \,\, \text{even}.
\end{dcases}
$$
The left (right) equality holds if and only if $T= K_{1,\,n-1}$ ($T= B_{n}^{\frac{n-3}{2}}$ or $T= B_{n}^{\frac{n-4}{2}}$).
\end{conjecture}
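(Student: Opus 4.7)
The plan is to treat the lower bound $R_L(K_{1,n-1}) \le R_L(T)$ and the upper bound $R_L(T) \le R_L(B_n^{t^\ast})$ separately, since they lean on quite different phenomena: the lower bound controls how close a tree can be to the star in spectral ratio, while the upper bound identifies which balance of a high-degree vertex and a long pendant path maximises the ratio.

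For the lower bound, the statement coincides with the left-hand inequality of Conjecture \ref{con1,1}, for which Theorems \ref{th4,1} and \ref{th4,2} already dispose of trees with either large diameter or large maximum degree. What remains is the regime in which both $\Delta(T)$ and $\mathcal{D}(T)$ are simultaneously small. For each $n$ this is a finite and rather restricted family, and the plan is to sharpen the Zagreb-index bound $\mu_1(T) \geq Z_1/(n-1)$ there: if $T$ has both small diameter and small maximum degree, many internal vertices must carry degree at least $2$, forcing $Z_1$ to grow well beyond $4n-6$. Combined with the inequality $\mu_{n-1}(T) < 1/2$ recalled in the proof of Theorem \ref{th4,2}, an improvement of the form $\mu_1(T) \geq n/2$ in this residual range would already yield $R_L(T) > n = R_L(K_{1,n-1})$. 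Strict inequality except at the star would follow from the strictness of each of these bounds.

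For the upper bound, the first step is to locate $\mu_1(B_n^t)$ and $\mu_{n-1}(B_n^t)$ using the factorization given in Theorem \ref{th5,2}: the factor $(\lambda-1)^{t-1}$ contributes only the eigenvalue $1$, so the two extreme Laplacian eigenvalues of $B_n^t$ are the largest and smallest positive roots of
\[
\bigl[\lambda^2 - (t+2)\lambda + 1\bigr]\,D_{n-t-1}(\lambda) \;-\; (\lambda-1)\,D_{n-t-2}(\lambda) \;=\; 0,
\]
with $D_j$ the tridiagonal determinant appearing in that proof. A one-parameter analysis in $t$ — balancing the growth of $\mu_1$ as more pendants concentrate on the branching vertex against the decay of $\mu_{n-1}$ as the pendant path shortens — should establish that $R_L(B_n^t)$ is unimodal in $t$ and maximised at $t^\ast = (n-3)/2$ or $(n-4)/2$ according to parity. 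The comparison with a general tree $T$ would then proceed by grafting transformations: given any $T$ that is not a broom, pick a pendant adjacent to a branching vertex and relocate it so as to either lengthen the longest path or concentrate more pendants at a single vertex, and show that each such move strictly increases $R_L$; iterating reduces $T$ to a broom, after which the unimodality in $t$ finishes the argument.

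The principal obstacle is this grafting-monotonicity step. Kelmans-type operations are well known to increase $\mu_1$, and Fiedler-type edge subdivisions or path-lengthenings are known to decrease $\mu_{n-1}$, but neither alone is a single transformation that universally increases $R_L$; the extremal broom sits at a genuine balance point, as already witnessed by the fact that $R_L(B_9^3)$ exceeds both $R_L(B_9^2)$ and $R_L(B_9^4)$. A plausible workaround is a two-stage reduction: first pass from general trees to caterpillars (for which Theorem \ref{th5,1} gives a closed-form spectrum that can be compared explicitly), and then pass from caterpillars to brooms via moves controlled by eigenvalue interlacing applied to the auxiliary matrix $L'$ built in the proof of Theorem \ref{th5,2}. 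The equality cases would drop out of the strictness of each of these reduction steps.
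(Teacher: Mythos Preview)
The statement you are attempting to prove is Conjecture~\ref{con5,1}, and the paper does \emph{not} supply a proof of it: it is proposed as an open problem, motivated only by the numerical data for $n=9$ in Figure~3 and by the failure of the upper bound in Conjecture~\ref{con1,1}. There is therefore no paper proof to compare your proposal against.

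As for the proposal itself, it is a reasonable strategic outline but not a proof, and you already identify the essential gap. On the lower-bound side, the reduction to ``small $\Delta$ and small $\mathcal{D}$ simultaneously'' does not obviously leave only a finite residual family independent of $n$: Theorems~\ref{th4,1} and~\ref{th4,2} give thresholds of order $\sqrt{n}$ in $\mathcal{D}$ and $\Delta$, so the uncovered region still grows with $n$, and the hoped-for bound $\mu_1(T)\ge n/2$ there is far from what $Z_1/(n-1)$ can deliver (for a path-like tree $Z_1/(n-1)\approx 4$). On the upper-bound side, the grafting-monotonicity step is, as you say, the crux, and the two-stage reduction to caterpillars and then to brooms is plausible but unsupported: neither the unimodality of $t\mapsto R_L(B_n^t)$ nor the claim that every reduction move strictly increases $R_L$ is established, and the $n=9$ data you cite (where $R_L(B_9^3)>R_L(B_9^2)>R_L(B_9^4)>R_L(P_9)$) already shows the landscape is non-monotone in natural directions. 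In short, the proposal is a sketch of a program, not a proof, and the conjecture remains open.
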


\small {

}

\end{document}